\documentclass[centertags,12pt]{amsart}
\textwidth=16.5cm
\oddsidemargin=0cm
\evensidemargin=0cm

\usepackage{amssymb}
\usepackage{verbatim}
\usepackage{array}
\usepackage{latexsym}
\usepackage{enumerate}
\usepackage{amsmath}
\usepackage{amsfonts}
\usepackage{amsthm}
\usepackage{color}
\usepackage[english]{babel}

\usepackage{tikz}
\usepackage{wasysym}
\usepackage[foot]{amsaddr}

\usepackage{mathrsfs}
\usepackage{setspace}
\usepackage{ textcomp }
\usepackage{graphicx}
\usepackage{cite}
\usepackage{comment,cancel}
\graphicspath{ {images/} }

\newtheorem{theorem}{Theorem}[section]

\newtheorem{lemma}[theorem]{Lemma}

%{\theoremstyle{definition}

\newtheorem{example}[theorem]{Example}
\newtheorem{rem}[theorem]{Remark}
\newtheorem{conjecture}[theorem]{Conjecture}
%\newtheorem*{proposition*}{Proposition}
%\newtheorem*{corollary*}{Corollary}
%\newtheorem*{lemma*}{Lemma}

% definitions

\def\deg{\mbox{\rm deg}}

%\def\ominus{{{[2n-1]}}}
% Groups

%\newcommand{\soc}{\mbox{\rm soc}}
%\newcommand{\Supp}{\mbox{\rm Supp}\gap}
%\newcommand{\supp}{\mbox{\rm Supp}}

%\newcommand{\Aut}{{\mbox{\rm Aut}_{K}(\gS)}}

%\newcommand{\T}{{\rm T}}
%\newcommand{\N}{{\rm N}}

%Greek lower case letters

%Greek capital letters

%Fractions in textstyle

%Capital bold letters

%\renewcommand{\baselinestretch}{1.5}

\begin{document}

\author[P. Beelen]{Peter Beelen}
\address{Department of Applied Mathematics and Computer Science, Technical University of Denmark, 2800 Kongens Lyngby, Denmark}
	\email{pabe@dtu.dk}

\author[M. Montanucci]{Maria Montanucci}
%\address{Department of Applied Mathematics and Computer Science, Technical University of Denmark, 2800 Kongens Lyngby, Denmark}
	\email{marimo@dtu.dk}

\title{A bound for the number of points of space curves over finite fields}

\begin{abstract}
For a non-degenerate irreducible curve $C$ of degree $d$ in $\mathbb{P}^3$ over $\mathbb{F}_q$, we prove that the number $N_q(C)$ of $\mathbb{F}_q$-rational points of $C$ satisfies the inequality $N_q(C) \leq (d-2)q+1$. Our result improves the previous bound $N_q(C) \leq (d-1)q+1$ obtained by Homma in \cite{Homma} and leads to a natural conjecture generalizing Sziklai’s bound for the number of points of plane curves over finite fields.
\end{abstract}

\maketitle

\vspace{0.5cm}\noindent {\em Keywords}:
Algebraic curve, Sziklai bound, rational point, finite field

\vspace{0.2cm}\noindent{\em MSC}:
14G15, 11G20
\vspace{0.2cm}\noindent

\section{Introduction}

In the series of papers \cite{H1,H2,H3}, Homma and Kim proved that for any plane curve $C$ of degree $d$ over a finite field $\mathbb{F}_q$ without $\mathbb{F}_q$-linear components, the number $N_q(C):=|C(\mathbb{F}_q)|$ of its $\mathbb{F}_q$-rational points satisfies
\begin{equation} \label{Szi}
N_q(C)\leq (d-1)q+ 1,
\end{equation}
with up to isomorphism exactly one exception, given by the $\mathbb{F}_4$-rational curve defined by the projective plane model
$$
K: (X+Y+Z)^4+ (XY+Y Z+ZX)^2+XYZ(X+Y+Z) = 0.
$$
The bound \eqref{Szi} was originally conjectured by Sziklai \cite{PSzi}, also giving explicit examples of curves whose number of rational points achieves this bound. Note that the curve $K$ satisfies $N_4(K) = 14$, exceeding the Sziklai bound by one.
%In particular, any absolutely irreducible, plane curve that is not a line is either isomorphic to $K$ (as a plane curve) or satisfies Sziklai's bound.

It is natural to ask if a curve $C$ contained in a higher dimensional projective space $\mathbb{P}^n$, $n \ge 3$, satisfies the Sziklai bound as well.
Recently Homma \cite{Homma} proved that indeed equation \eqref{Szi} holds for such curves as well.
However no examples of space curves whose number of rational points attains this bound were provided, leaving the possibility that the bound could be improved.

When studying a curve $C$ in projective space $\mathbb{P}^n$, it is natural to assume that the curve is non-degenerate. This means that the curve is not contained in any hyperplane of $\mathbb{P}^n$.
A degenerate curve $C$ in $\mathbb{P}^n$ can namely be seen as a curve in $\mathbb{P}^{n-1}$, by viewing the hypersurface of $\mathbb{P}^n$ containing $C$ as $\mathbb{P}^{n-1}$.
Note that a degenerate curve in $\mathbb{P}^2$ necessarily is a projective line leading to the easy observation that a plane curve is non-degenerate if and only if it is not a line.
Furthermore, trivially any curve contained in $\mathbb{P}^1$ is necessarily non-degenerate and equal to $\mathbb{P}^1$.
Hence one can say that an absolutely irreducible, non-degenerate curve $C$ in $\mathbb{P}^1$ defined over $\mathbb{F}_q$ satisfies the bound $|C(\mathbb{F}_q)| \le qd+1,$ simply because the only curve under consideration is the projective line so that $d=1$ and $|C(\mathbb{F}_q)|=q+1.$
Hence the $n=1$ case for considering absolutely irreducible, non-degenerate curves in $\mathbb{P}^n$ may not be very interesting, but it leads in a natural way to the following conjecture.
\begin{conjecture} \label{conj}
Let $\mathbb{F}_q$ be a finite field with $q$ elements. Further, let $C \subseteq \mathbb{P}^n$, $n \geq 3$ be an absolutely irreducible, non-degenerate algebraic curve of degree $d$ defined over $\mathbb{F}_q$. Then $|C(\mathbb{F}_q)| \leq q(d-n+1)+1$.
\end{conjecture}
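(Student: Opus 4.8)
The plan is to establish the conjecture in its essential new case, $n=3$, that is, to prove $N_q(C)\le (d-2)q+1$ for an absolutely irreducible, non-degenerate curve $C\subseteq\mathbb{P}^3$ of degree $d$ over $\mathbb{F}_q$ (the case $n>3$ is discussed at the end). First I would clear the easy ranges. If $d\ge q^2+q+3$ then $(d-2)q+1\ge q^3+q^2+q+1=|\mathbb{P}^3(\mathbb{F}_q)|\ge N_q(C)$, so one may assume $3\le d\le q^2+q+2$ (and $d\ge 3$ anyway, a non-degenerate space curve having degree at least $3$). If the geometric genus $g$ of $C$ is small, the Hasse--Weil bound applied to the normalization, corrected by a Castelnuovo-type bound on the number of branches at the singular points, already gives the claim (after inspecting finitely many small $q$); so one may further assume $g$ large, hence that $C$ has many $\mathbb{F}_q$-points. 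Throughout, $N_q(C)$ denotes the number of $\mathbb{F}_q$-points counted set-theoretically, singular points being handled through their branches exactly as in \cite{H1,H2,H3,Homma}.

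For an $\mathbb{F}_q$-line $\ell$ — never a component of $C$, by irreducibility and non-degeneracy — put $s(\ell)=|C(\mathbb{F}_q)\cap\ell|\le d$. The argument then splits on $\max_\ell s(\ell)$. Suppose first that some $\mathbb{F}_q$-line $\ell_0$ has $s(\ell_0)\ge\lceil (d-1)/q\rceil+2$. Consider the pencil of the $q+1$ $\mathbb{F}_q$-planes $H_0,\dots,H_q$ through $\ell_0$. Non-degeneracy gives $C\not\subseteq H_i$, so $|C(\mathbb{F}_q)\cap H_i|\le d$ by Bézout; since the sets $H_i\setminus\ell_0$ partition $\mathbb{P}^3(\mathbb{F}_q)\setminus\ell_0$, summing $|C(\mathbb{F}_q)\cap(H_i\setminus\ell_0)|=|C(\mathbb{F}_q)\cap H_i|-s(\ell_0)\le d-s(\ell_0)$ over $i$ yields $N_q(C)\le (q+1)d-q\,s(\ell_0)\le (d-2)q+1$, with the last inequality an equality only when $q\mid d-1$ and every plane of the pencil is ``full''.

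One may hence assume $s(\ell)\le m:=\lceil (d-1)/q\rceil+1$ for every $\mathbb{F}_q$-line. If $d\le q+1$ then $m=2$, so no three $\mathbb{F}_q$-points of $C$ are collinear. Choose $O\in C(\mathbb{F}_q)$ at which $C$ is smooth, from which the projection is birational onto its image, and whose tangent line is not a secant — properties that fail only on a bounded subset of $C(\mathbb{F}_q)$, the residual ``strange'' configurations being treated apart. Projecting from $O$ sends $C$ onto an $\mathbb{F}_q$-plane curve $C'$ of degree $d-1$ that is not a line and has no $\mathbb{F}_q$-line component (such a component would force $C$ into a plane). As no $\mathbb{F}_q$-trisecant passes through $O$, the projection is injective on $C(\mathbb{F}_q)\setminus\{O\}$, and its image omits the $\mathbb{F}_q$-point of $C'$ given by the tangent direction at $O$; therefore $N_q(C)-1\le N_q(C')-1$ and the Homma--Kim bound \eqref{Szi} for $C'$ (of degree $d-1\ge 2$) gives $N_q(C)\le (d-2)q+1$. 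The exceptional quartic $K$ can arise only for $d-1=4$, $q=4$, and is ruled out directly.

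The remaining case, $q+2\le d\le q^2+q+2$ with $m\ge 3$, is the crux. One still projects from a carefully chosen $O\in C(\mathbb{F}_q)$ onto a degree-$(d-1)$ plane curve $C'$, but now must balance the fibre excess $E(O)=\sum_{\ell\ni O}\max\{s(\ell)-2,\,0\}$, contributed by the $\mathbb{F}_q$-trisecants through $O$, against the number $\mathrm{miss}(O)$ of $\mathbb{F}_q$-points of $C'$ not hit by $\pi_O$ — among them the tangent direction at $O$ and the images of those trisecants through $O$ carrying a conjugate pair of non-$\mathbb{F}_q$-points of $C$ — the goal being an $O$ with $\mathrm{miss}(O)\ge E(O)+1$. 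I expect producing such an $O$ to be the main obstacle. The two levers are a rigidity input — through a general point the number of trisecants of $C$ is governed by the double-point formula, so an abundance of trisecants forces $g$ small and then Hasse--Weil already wins — and an averaging argument over the many available $\mathbb{F}_q$-centres, using $s(\ell)\le m$ to limit how concentrated the trisecants can be; small $q$ and small $d$ are then cleaned up by hand. For $n>3$ the natural approach is to iterate this projection, dropping the degree by one at each stage down to a plane curve of degree $d-n+2$ and invoking \eqref{Szi}; the difficulty — and the reason the conjecture is stated rather than proved in general — is that the fibre-excess bookkeeping must be kept under control over $\mathbb{F}_q$ at all $n-2$ projections at once.
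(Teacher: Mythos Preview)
Your overall architecture---pencil of planes through a well-chosen line, then projection from a rational point onto a plane curve and appeal to the Sziklai bound---is the same as the paper's. The difficulty is that your ``crux'' case is not a proof but a wish list: you hope to find a centre $O$ with $\mathrm{miss}(O)\ge E(O)+1$ via a double-point formula or an averaging argument, and you acknowledge this is the main obstacle. It is, and the mechanisms you propose are not what resolves it.

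What you are missing is a direct bookkeeping identity. First, the paper reduces to $d\le 2q$ using Homma's bound $N_q(C)\le\frac{(q-1)(q^{4}-1)}{q(q^{3}-1)-3(q-1)}\,d$, not the trivial $|\mathbb{P}^3(\mathbb{F}_q)|$ bound; this already forces $m\le 3$, so the only residual case is ``no $4$-secant, but some trisecant through every rational point''. Now fix any $P\in C(\mathbb{F}_q)$ and let $\ell_3(P)$ be the number of $\mathbb{F}_q$-trisecants through $P$ (counting $t_P$ if it carries a second rational point). If $\rho_P$ is birational, then each such trisecant produces a \emph{singular} $\mathbb{F}_q$-point of the image $C'\subset\pi$. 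Rather than applying Sziklai to $C'$ and trying to beat the excess $E(O)=\ell_3(P)$, the paper runs a second pencil argument \emph{inside} $\pi$ through one of these singular points $R$: every line in $\pi$ through $R$ meets $C'\setminus\{R\}$ in at most $d-3$ points (since $\deg C'\le d-1$ and $R$ is at least a double point), and the remaining $\ell_3(P)-1$ singular images each cost a further unit on their respective line. This yields $|C'(\mathbb{F}_q)|\le (q+1)(d-3)-\ell_3(P)+2$, and adding back the fibre excess $\ell_3(P)$ gives $|C(\mathbb{F}_q)|\le (q+1)(d-3)+2=(d-2)q-q+1\le (d-2)q-1$. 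The $\ell_3(P)$ cancels exactly---no averaging, no choice of centre, no trisecant-counting formula is needed. The case $\deg\rho_P\ge 2$ is handled separately and is easy, since then $\deg C'\le (d-1)/2$ and Sziklai on $C'$ wins with room to spare. Your treatment of the exceptional quartic $K$ and of ``strange configurations'' is also left as hand-waving; in the paper the former is dispatched via Castelnuovo's genus bound (forcing $C$ to be strange, a contradiction) and the latter simply does not arise once the case split is organized this way.
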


As noted before for $n=1$ the conjecture is trivial and for $n=2$ it was shown to be true for all plane curves, except those isomorphic to the exceptional curve $K$.
Note that any non-degenerate curve in $\mathbb{P}^n$ has degree at least $n$, so that the expression $d-n+1$ is at least one.
In this paper, we prove the conjecture for $n=3$ and provide partial results for $n >3$.
For $n=3$ we also explore the consequences of the conjecture for more general, possibly reducible, curves and prove the following:
\begin{theorem} \label{main}
Let $\mathbb{F}_q$ be a finite field with $q$ elements. Further, let $C \subseteq \mathbb{P}^3$ be a non-degenerate algebraic curve of degree $d$ defined over $\mathbb{F}_q$. Moreover, suppose that if $C$ is not absolutely irreducible, then it contains no lines defined over $\mathbb{F}_q$ and that it contains at least one non-degenerate $\mathbb{F}_q$-irreducible component. Then $|C(\mathbb{F}_q)| \leq q(d-2)+1$.
\end{theorem}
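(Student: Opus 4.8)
The plan is to reduce Theorem~\ref{main} to three ingredients: the $n=3$ case of Conjecture~\ref{conj} for \emph{absolutely irreducible} non-degenerate curves (the main technical result established in the paper, giving $|D(\mathbb{F}_q)|\le q(\deg D-2)+1$ for such $D\subseteq\mathbb{P}^3$), Homma's bound $|B(\mathbb{F}_q)|\le q(\deg B-1)+1$ for an arbitrary curve $B$ in projective space with no $\mathbb{F}_q$-linear components, and the Sziklai bound for plane curves. First I would decompose $C=C_1\cup\cdots\cup C_s$ into its $\mathbb{F}_q$-irreducible components, of degrees $d_1,\dots,d_s$ with $\sum d_i=d$. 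If $s=1$ and $C$ is absolutely irreducible, we are done immediately by the first ingredient; so assume $C$ is not absolutely irreducible, whence by hypothesis $C$ contains no $\mathbb{F}_q$-line (so every $d_i\ge 2$) and some component, say $C_1$, is non-degenerate (so $d_1\ge 3$).

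The heart of the argument will be a bound for an $\mathbb{F}_q$-irreducible component that is not absolutely irreducible. I would prove: if $E\subseteq\mathbb{P}^3$ is non-degenerate, $\mathbb{F}_q$-irreducible of degree $e\ge 3$, and not absolutely irreducible, then $|E(\mathbb{F}_q)|\le q(e-2)+1$. Write $E=D\cup D^{\phi}\cup\cdots\cup D^{\phi^{k-1}}$ with $\phi$ the $q$-Frobenius, $D$ absolutely irreducible over $\mathbb{F}_{q^k}$ for minimal $k\ge 2$, and $\delta:=\deg D=e/k\le e/2$. A point $P\in E(\mathbb{F}_q)$ lies on some conjugate and, being $\phi$-fixed, then lies on all of them, in particular on $D$; hence $E(\mathbb{F}_q)\subseteq D$. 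If $\delta=1$ the conjugates are at least two distinct lines, so their common locus has at most one point and $|E(\mathbb{F}_q)|\le 1\le q(e-2)+1$. If $\delta\ge 2$, then no $\mathbb{F}_q$-rational plane contains $D$: a curve of degree $\ge 2$ has a unique ambient plane when it is planar, and that plane cannot be $\phi$-stable, for a $\phi$-stable plane through $D$ would contain every conjugate and hence all of $E$, contradicting non-degeneracy. Thus every one of the $q^3+q^2+q+1$ planes of $\mathbb{P}^3$ over $\mathbb{F}_q$ meets $D$ in at most $\delta$ points; counting incidences $(P,\Pi)$ with $P\in E(\mathbb{F}_q)$ and $P\in\Pi$ (each such $P$ lying on $q^2+q+1$ of the planes) gives $|E(\mathbb{F}_q)|(q^2+q+1)\le \delta(q^3+q^2+q+1)$, so $|E(\mathbb{F}_q)|\le \delta q+\lfloor\delta/(q^2+q+1)\rfloor$. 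For $\delta\le q^2+q$ this yields $|E(\mathbb{F}_q)|\le\delta q\le (e/2)q\le q(e-2)+1$ (using $e=k\delta\ge 4$); for $\delta\ge q^2+q+1$ one has $e\ge 2(q^2+q+1)$ and the crude bound $|E(\mathbb{F}_q)|\le q^3+q^2+q+1\le q(e-2)+1$ closes the case.

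To finish: if $C$ is itself $\mathbb{F}_q$-irreducible (hence, in the remaining case, not absolutely irreducible), the claim with $E=C$ is exactly the theorem. Otherwise $s\ge 2$; put $C'=C_2\cup\cdots\cup C_s$, of degree $d-d_1\ge 2$ and with no $\mathbb{F}_q$-linear component. Bounding the non-degenerate component $C_1$ by the absolutely irreducible case of Conjecture~\ref{conj} (if $C_1$ is absolutely irreducible) or by the claim above (if it is not) gives $|C_1(\mathbb{F}_q)|\le q(d_1-2)+1$, while Homma's bound gives $|C'(\mathbb{F}_q)|\le q(d-d_1-1)+1$; summing, $|C(\mathbb{F}_q)|\le q(d-3)+2\le q(d-2)+1$ since $q\ge 1$. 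I would also dispatch the one exceptional-curve subtlety: for $q=4$, $C'$ might contain a copy of the plane quartic $K$, costing a loss of $1$ in Homma's bound for $C'$; but then the total is at most $q(d-3)+3\le q(d-2)+1$ because $q\ge 2$, so the conclusion still holds. The step I expect to be the real obstacle is the claim about non-absolutely-irreducible components: no Bezout-type estimate is available for the intersection of the Galois conjugates of $D$ in $\mathbb{P}^3$, and the point is to replace it by the hyperplane-incidence count above, which crucially uses non-degeneracy to forbid an $\mathbb{F}_q$-rational plane through $D$.
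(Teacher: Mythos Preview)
Your proposal is correct and follows the paper's overall two-case split (irreducible over $\mathbb{F}_q$ but not absolutely irreducible, versus reducible over $\mathbb{F}_q$), but the arguments inside each case differ.

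For an $\mathbb{F}_q$-irreducible, non-absolutely-irreducible component $E$, the paper's argument is more localised: it observes that every $P\in E(\mathbb{F}_q)$ lies on all Galois conjugates and hence has multiplicity at least $t\ge 2$ on $E$; choosing two such rational points $P_1,P_2$ and running B\'ezout on the pencil of planes through the line $\overline{P_1P_2}$ gives $|E(\mathbb{F}_q)|\le 2+(q+1)(e-2t)/t\le q(e-2)-1$. Your approach instead uses a global double count of incidences between $E(\mathbb{F}_q)$ and all $\mathbb{F}_q$-planes, after the key observation that no $\mathbb{F}_q$-rational plane can contain the absolutely irreducible piece $D$ when $\delta\ge 2$. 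Both arguments exploit non-degeneracy in the same essential way (to forbid a plane through the whole curve), and both yield what is needed; the paper's line-pencil version gives a slightly sharper constant, while yours avoids having to single out two rational points and handles the $\delta=1$ case separately by a direct line-intersection bound.

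For the $\mathbb{F}_q$-reducible case, the paper sums over all $\mathbb{F}_q$-components individually, applying Sziklai to the degenerate ones and the $n=3$ Conjecture to a chosen non-degenerate one, with an explicit case analysis for components isomorphic to $K$. You instead peel off one non-degenerate component $C_1$ and apply Homma's $(d-1)q+1$ bound to the union $C'=C_2\cup\cdots\cup C_s$ in one stroke. This is cleaner, but note that the version of Homma's bound you invoke (for possibly reducible curves in $\mathbb{P}^n$ without $\mathbb{F}_q$-linear components) is not the form the paper cites; it can of course be recovered by summing the irreducible-curve bound over the components of $C'$, which is essentially what the paper does. Your handling of the $K$-exception is also correct: the ``loss of $1$'' can only occur once globally (when $C'\cong K$), since with $m\ge 2$ components the extra $+1$'s are absorbed by the slack $(q-1)(m-1)$ coming from the sum, so multiple copies of $K$ cause no trouble.
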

For $n>3$, we prove Conjecture \ref{conj} for large values of $d$, while we also give some examples of absolutely irreducible, non-degenerate curves attaining the bound in Conjecture \ref{conj} with equality. 
%A reasonable unification between Theorem \ref{main} and Sziklai's bound \eqref{Szi} is proposed in the following conjecture.
%
%\begin{conjecture} \label{conj}
%Let $p$ be a prime and let $\mathbb{F}_q$ be a finite field with $q=p^h$, $h \geq 1$ elements. Let $C \subseteq \mathbb{P}^n$, $n \geq 3$ be an %absolutely irreducible non-degenerate algebraic curve of degree $d$ defined over $\mathbb{F}_q$. Then $|C(\mathbb{F}_q)| \leq q(d-n+1)+1$.
%\end{conjecture}

\section{Preliminaries}

In this section, we collect several known preliminary results from the literature that we will use later on. We start with a known bound on the number of points on a non-degenerate curve.
%We start by showing that the conjectured bound $q(d-n+1)+1$ is at least $q+1$.
%\begin{lemma} \label{deg}
%Let $\mathbb{F}_q$ be a finite field with $q$ elements. Further, let $C \subseteq \mathbb{P}^n$ be an absolutely irreducible, non-degenerate algebraic curve of degree $d$ defined over $\mathbb{F}_q$. Then $d \geq n$.
%\end{lemma}
%
%\begin{proof}
%The curve $C$ can be seen in $\mathbb{P}^n(\overline{\mathbb{F}}_q)$. Since it is non-degenerate, there exist $n$ distinct points $P_1,\ldots,P_n$ in $C$ in general position, otherwise $C$ would be contained in a hyperplane. Let $H=span(P_1,\ldots,P_n)$. Since $H$ is a hyperplane, we conclude from B{\' e}zout's theorem that $n \leq |C \cap H| \leq d$.
%\end{proof}
%
%\begin{lemma}{\rm{\cite[Lemma 3.1]{Homma}}} \label{hommas}
%Let $\mathbb{H}$ be the set of hyperplanes of $\mathbb{P}^n(\mathbb{F}_q)$, $n \geq 3$, and let $C$ be a non-degenerate irreducible curve in $\mathbb{P}^n(\mathbb{F}_q)$. For a point $P \in \mathbb{P}^n(\mathbb{F}_q)$,
%$$\sum_{H \in \mathbb{H}, P \in H} i(C \cap H,P) \geq \frac{q^{n-1}+q^{n-2}+\ldots+q+1-n}{q-1},$$
%where $i(C \cap H,P)$ denotes the intersection multiplicity of $C$ and $H$ at $P$.
%\end{lemma}

\begin{theorem}{\rm{\cite[Theorem 3.2]{Homma}}} \label{thm:homma}
Let $C$ be a non-degenerate, irreducible curve in $\mathbb{P}^n(\mathbb{F}_q)$ of degree d.
Then $$|C(\mathbb{F}_q)| \le \dfrac{(q-1)(q^n+\cdots+1)}{q^n+\cdots+q-n}d=\dfrac{(q-1)(q^{n+1}-1)}{q(q^n-1)-n(q-1)}d.$$
\end{theorem}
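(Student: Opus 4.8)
I would read Theorem \ref{thm:homma} as a point--hyperplane incidence inequality and prove it by double counting, using local intersection multiplicities as weights and extracting the sharp constant from the osculating spaces of $C$. Write $\pi_k:=|\mathbb{P}^k(\mathbb{F}_q)|=(q^{k+1}-1)/(q-1)$; then $\mathbb{P}^n$ has $\pi_n$ $\mathbb{F}_q$-hyperplanes, each $\mathbb{F}_q$-point lies on $\pi_{n-1}$ of them, and each $\mathbb{F}_q$-linear subspace of dimension $j$ is contained in $\pi_{n-1-j}$ of them. Put $N:=|C(\mathbb{F}_q)|$ and, for an $\mathbb{F}_q$-hyperplane $H$ and $P\in C\cap H$, let $i_P(C,H)$ denote the local intersection multiplicity. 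The object to estimate in two ways is
\[
S:=\sum_{P\in C(\mathbb{F}_q)}\;\sum_{H\ni P} i_P(C,H),
\]
the inner sum running over the $\mathbb{F}_q$-hyperplanes through $P$.

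For the upper bound I would use non-degeneracy: since $C$ lies in no $\mathbb{F}_q$-hyperplane, for every one of the $\pi_n$ $\mathbb{F}_q$-hyperplanes $H$ the intersection $C\cap H$ is a zero-dimensional scheme of degree $d$, whence $\sum_{P\in C(\mathbb{F}_q)\cap H} i_P(C,H)\le\sum_{P\in C\cap H} i_P(C,H)=d$; summing over $H$ gives $S\le d\,\pi_n$. For the lower bound I fix $P\in C(\mathbb{F}_q)$. If $P$ is a singular point of $C$, then $i_P(C,H)\ge\mathrm{mult}_P(C)\ge2$ for every $H\ni P$, so $\sum_{H\ni P} i_P(C,H)\ge 2\pi_{n-1}\ge\sum_{k=0}^{n-1}\pi_k$, the last step being the elementary estimate $\pi_{n-1}\ge\sum_{k=0}^{n-2}\pi_k$. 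If $P$ is smooth, I would use the osculating spaces: for $0\le j\le n-1$ there is an $\mathbb{F}_q$-rational linear space $M_j(P)$ of dimension at most $j$ (the span of $P$ together with the first $j$ Hasse derivatives of a local $\mathbb{F}_q$-parametrization) with the property that $i_P(C,H)\ge j+1$ whenever $M_j(P)\subseteq H$; hence
\[
\sum_{H\ni P} i_P(C,H)=\sum_{t\ge1}\#\{H\ni P:\ i_P(C,H)\ge t\}\ \ge\ \sum_{j=0}^{n-1}\#\{H:\ M_j(P)\subseteq H\}\ \ge\ \sum_{j=0}^{n-1}\pi_{n-1-j}=\sum_{k=0}^{n-1}\pi_k .
\]
In both cases $\sum_{H\ni P}i_P(C,H)\ge\sum_{k=0}^{n-1}\pi_k$, so $S\ge N\sum_{k=0}^{n-1}\pi_k$. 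Comparing with the upper bound yields $N\sum_{k=0}^{n-1}\pi_k\le d\,\pi_n$, and the claimed inequality follows after simplification, using $(q-1)\sum_{k=0}^{n-1}\pi_k=\sum_{k=1}^{n}(q^k-1)=\dfrac{q(q^n-1)-n(q-1)}{q-1}$ together with $(q-1)\pi_n=q^{n+1}-1$.

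The routine parts are the two identities for $\sum_k\pi_k$ and the inequality $\pi_{n-1}\ge\sum_{k=0}^{n-2}\pi_k$, which reduces to $q^n(q-2)+(n-1)(q-1)+1\ge0$ and so holds for every $q\ge2$. The real work, and the point I would be most careful about, is the construction of the spaces $M_j(P)$ at a smooth $\mathbb{F}_q$-point and the verification that they are defined over $\mathbb{F}_q$ and enjoy the stated contact property: this requires a local parametrization rational over $\mathbb{F}_q$ and, in positive characteristic, the Hasse (rather than ordinary) derivatives, with the caveat that $M_j(P)$ may have dimension strictly less than $j$ at a non-classical point --- this is harmless, since a smaller $M_j(P)$ is contained in more $\mathbb{F}_q$-hyperplanes and only strengthens the lower bound. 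One should also record that the only loss in the whole argument is the passage from $C\cap H$ to its $\mathbb{F}_q$-rational points in the upper bound, which pins down the extremal curves as those for which every $\mathbb{F}_q$-hyperplane meets $C$ only in $\mathbb{F}_q$-points and only with the ``expected'' multiplicities.
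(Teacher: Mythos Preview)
The paper does not prove Theorem~\ref{thm:homma}; it is quoted verbatim from \cite{Homma} as a preliminary result, so there is no in-paper argument to compare against. That said, your proposal is correct and is essentially a reconstruction of Homma's own proof: the weighted double count $S=\sum_{P\in C(\mathbb{F}_q)}\sum_{H\ni P} i_P(C,H)$, bounded above by $d\,\pi_n$ via B\'ezout (using non-degeneracy) and below by $N\sum_{k=0}^{n-1}\pi_k$ via the osculating flag at each $\mathbb{F}_q$-point, is precisely the mechanism in \cite{Homma}. Your use of Hasse derivatives to define the $M_j(P)$ over $\mathbb{F}_q$, the observation that $\dim M_j(P)<j$ only helps, and the treatment of singular $\mathbb{F}_q$-points through $i_P(C,H)\ge 2$ together with $2\pi_{n-1}\ge\sum_{k=0}^{n-1}\pi_k$ are all in order. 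The final identity $(q-1)\sum_{k=0}^{n-1}\pi_k=q(q^n-1)/(q-1)-n$ that you use to rewrite the bound is also correct after clearing the extra factor of $q-1$, matching the stated expression.
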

Of course the upper bound stated in this theorem need not be an integer, so a trivial improvement can be obtained by taking the floor of it.

Next we recall the result from \cite{H1,H2,H3} on plane curves that we mentioned in the introduction.
\begin{theorem} \label{plane}
Let $C$ be a plane curve of degree $d$ over $\mathbb{F}_q$ without $\mathbb{F}_q$-linear components, then
$$|C(\mathbb{F}_q)| \leq q(d-1)+1,$$
unless $q=4$ and $C$ is isomorphic to the curve $K: (X+Y+Z)^4+(XY+YZ+ZX)^2+XYZ(X+Y+Z)=0$. In the latter case $|C(\mathbb{F}_4)|=14$.
\end{theorem}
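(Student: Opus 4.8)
Broadly following \cite{H1,H2,H3}, the plan is to reduce to a small band of degrees and there combine a double count over the lines of $\mathbb{P}^2(\mathbb{F}_q)$ with the Hasse--Weil and St\"ohr--Voloch bounds.

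First I would clear the trivial ranges. A degree-$1$ curve is a line, which is excluded; a conic has at most $q+1=(d-1)q+1$ rational points; and if $d\ge q+2$ then $|C(\mathbb{F}_q)|\le|\mathbb{P}^2(\mathbb{F}_q)|=q^2+q+1\le (d-1)q+1$ automatically. So only $3\le d\le q+1$ remains. For such $C$ I record that, since $C$ has no $\mathbb{F}_q$-linear component, B\'ezout gives $|\ell\cap C(\mathbb{F}_q)|\le d$ for each of the $q^2+q+1$ lines $\ell$.

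Next, a pencil of lines through a rational point removes every singular case. If $P\in C(\mathbb{F}_q)$ has multiplicity $m_P$, then every line through $P$ meets $C$ at $P$ with intersection multiplicity at least $m_P$, hence contains at most $d-m_P$ further $\mathbb{F}_q$-points of $C$; as the $q+1$ lines through $P$ partition $C(\mathbb{F}_q)\setminus\{P\}$,
$$|C(\mathbb{F}_q)|\le (q+1)(d-m_P)+1 .$$
Since $(q+1)(d-2)+1\le (d-1)q+1$ whenever $d\le q+1$, we are done if $C$ has a singular $\mathbb{F}_q$-point; so we may assume $C$ is smooth along $C(\mathbb{F}_q)$. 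If moreover $C$ is not absolutely irreducible, its geometric components fall into Galois orbits: an $\mathbb{F}_q$-rational component has smaller degree and no $\mathbb{F}_q$-line, hence is handled by induction, while every $\mathbb{F}_q$-point lying on a non-rational component lies on two conjugate components, hence on their intersection, of size $\le (d/2)^2$ by B\'ezout; a short bookkeeping then gives $|C(\mathbb{F}_q)|\le (d-1)q+1$ in this case. So we may assume $C$ is absolutely irreducible; then $|C(\mathbb{F}_q)|\le|\tilde C(\mathbb{F}_q)|$ for the nonsingular model, and Hasse--Weil with $g(\tilde C)\le\binom{d-1}{2}$ gives
$$|C(\mathbb{F}_q)|\le q+1+(d-1)(d-2)\sqrt q ,$$
which is $\le (d-1)q+1$ as soon as $d\le\sqrt q+2$.

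The remaining band $\sqrt q+2<d\le q+1$, with $C$ absolutely irreducible and smooth along $C(\mathbb{F}_q)$, is the main obstacle. Here I would argue by contradiction: put $N=|C(\mathbb{F}_q)|$ and suppose $N\ge (d-1)q+2$. Writing $c_\ell=|\ell\cap C(\mathbb{F}_q)|$, the identities $\sum_\ell c_\ell=N(q+1)$ and $\sum_\ell\binom{c_\ell}{2}=\binom N2$ together with $c_\ell\le d$ yield $\sum_\ell c_\ell(d-c_\ell)\le N(d-2)$; since $c_\ell(d-c_\ell)\ge d-1$ whenever $1\le c_\ell\le d-1$, fewer than $N$ lines are neither disjoint from $C(\mathbb{F}_q)$ nor full $d$-secants, and then the incidence count forces an abundance of $d$-secants, whereas the tangent line at any smooth rational point is never a $d$-secant. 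For Frobenius-classical curves the St\"ohr--Voloch bound already contradicts $N\ge (d-1)q+2$ throughout most of this band, essentially leaving degrees $d$ near $q$; the Frobenius-non-classical curves are dealt with through the structure theory of such curves; and the residual possibilities are eliminated by a direct analysis of the $d$-secant configuration and the dual curve. The delicate point is that these constraints turn out to be almost, but not quite, contradictory: they are satisfied in exactly one case, $q=4$ and $d=4$, by the curve $K$ with $N_4(K)=14=(d-1)q+2$. The heart of the argument is thus to show that this is the sole exception — equivalently, that the forced near-extremal configuration is over-determined in every other case — and to check that $K$ indeed attains $14$ points.
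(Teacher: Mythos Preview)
The paper does not prove this theorem at all: Theorem~\ref{plane} is stated in the preliminaries section as a known result and simply attributed to Homma and Kim \cite{H1,H2,H3}. There is therefore no ``paper's own proof'' to compare against; the statement is quoted as input to the rest of the paper.

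Your proposal is a reasonable high-level summary of the strategy carried out across \cite{H1,H2,H3}: trim the trivial degree ranges, dispose of singular rational points via a pencil count, reduce to absolutely irreducible curves, use Hasse--Weil for small $d$, and then in the critical band combine the double count over lines with St\"ohr--Voloch and the classification of Frobenius non-classical curves. The identities you write down are correct, and the reductions are the right ones. However, as a proof it is incomplete precisely where the real work lies. The phrases ``a short bookkeeping then gives'', ``the Frobenius-non-classical curves are dealt with through the structure theory'', and ``the residual possibilities are eliminated by a direct analysis of the $d$-secant configuration and the dual curve'' each stand in for a substantial argument; in the original papers, isolating the single exception $K$ over $\mathbb{F}_4$ required a separate paper \cite{H3} and a careful case analysis near $d=q$ that your sketch does not reproduce. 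So what you have is an accurate road map of the cited proof rather than a proof, and since the present paper only \emph{invokes} the result, that is arguably all that is called for here --- but you should be explicit that you are summarising \cite{H1,H2,H3} rather than giving a self-contained argument.
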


While proving Conjecture \ref{conj}, we will need some results on \textit{strange} curves.
A curve $C$ is said to be strange if the tangent lines at the non-singular points of $C$ all intersect at a common point, called a nucleus. A trivial example of a strange curve is a line. A strange curve that is not a line, has in fact exactly one nucleus \cite[Theorem 1.28]{HKT}.
Non-trivial strange curves are known to exist in positive characteristic.
An example of a non-singular strange curve is given by an irreducible conic in characteristic $2$. In fact, the following result, due to Samuel \cite{Samuel}, classifies all non-singular strange curves, also see \cite[Theorem IV.3.9]{Hart} and \cite[Remark (2.2)]{BH}.
\begin{theorem} \label{strange}
The only non-singular strange curves in $\mathbb{P}^n$ are lines and irreducible conics in characteristic $2$.
\end{theorem}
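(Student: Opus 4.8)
\emph{Plan.} I would first reduce to the case that $X$ is non-degenerate and not a line: a degenerate strange curve in $\mathbb{P}^n$ is a strange curve in a smaller projective space, and a line is already on the list. So assume $X\subseteq\mathbb{P}^n$ is smooth, non-degenerate, strange and not a line; it then has a unique nucleus $P$. The key observation is that the Gauss map $\gamma$ sending a smooth point $Q$ to its tangent line $T_QX$ --- which by strangeness is the line $\overline{PQ}$, regarded as a point of the $\mathbb{P}^{n-1}$ of lines through $P$ --- coincides with the projection $\pi_P|_X$ of $X$ from $P$. Since the fibre of $\pi_P$ through $Q$ is exactly the line $\overline{PQ}=T_QX$, the differential of $\gamma=\pi_P|_X$ kills $T_QX$, hence vanishes, at every $Q$; and $\gamma$ is non-constant (otherwise all tangent lines coincide and $X$ is a line). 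In characteristic $0$ this is impossible, forcing $X$ to be a line; hence $\mathrm{char}\,k=p>0$ and $\gamma$ is purely inseparable, factoring through a power of Frobenius.

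Next I would descend to a plane curve. If $n\ge 3$, projecting $X$ from a general linear centre of dimension $n-3$ gives a birational morphism onto a plane curve $X'$ of the same degree $d$ whose only singularities are nodes; for $n=2$ take $X'=X$. In either case $X'$ is a strange plane curve of degree $d$, birational to $X$, so its normalization is $X$ since $X$ is smooth. Projecting $X'$ from its own nucleus to $\mathbb{P}^1$ and repeating the differential argument shows this map is purely inseparable, so the normalization of $X'$ --- and hence $X$ --- is isomorphic to $\mathbb{P}^1$. Thus $X$ is a smooth rational curve of degree $d$ in $\mathbb{P}^n$.

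Now I would write out the strangeness condition on a parametrization $[t:s]\mapsto[f_0:\cdots:f_n]$ of $X$, the $f_i$ being coprime forms of degree $d$ chosen so that the nucleus is $[1:0:\cdots:0]$. Strangeness says that $(f_1,\dots,f_n)$ and $(\partial_t f_1,\dots,\partial_t f_n)$ are proportional at every point, i.e.\ all $2\times 2$ minors $f_i\,\partial_t f_j-f_j\,\partial_t f_i$ ($1\le i,j\le n$) vanish identically; since the kernel of $\partial_t$ on rational functions is the field of $p$-th powers, each ratio $f_j/f_i$ is a rational function of $t^p/s^p$, and after dividing out a common factor $g$ one gets $f_j=g\cdot h_j(t^{p^e},s^{p^e})$ with the $h_j$ coprime. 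Comparing degrees via $\deg X=p^e\cdot\deg\pi_P(X)$ (up to a harmless unit coming from whether $P$ lies on $X$) together with $\deg\pi_P(X)\ge n-1$ (its image is non-degenerate in $\mathbb{P}^{n-1}$) forces $g$ to be trivial (or linear, a case disposed of separately), so $f_j=h_j(t^{p^e},s^{p^e})$ for $j=1,\dots,n$ with $\delta:=d/p^e\ge n-1$.

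Finally I would exploit that this parametrization is an embedding, $X$ being smooth. The coordinates $f_1,\dots,f_n$ carry no first-order information --- their derivatives are "Frobenius-killed" --- so the single coordinate $f_0$ must by itself separate both points and tangent directions; imposing the immersion condition at the point lying over the nucleus, together with Euler's identity $t\,\partial_t f+s\,\partial_s f=d\,f$ --- which degenerates precisely when $p\mid d$ --- forces $d\le 2$. Since $d=p^e\delta$ with $\delta\ge n-1\ge 1$ and $p^e\ge 2$, this gives $d=2$, $p^e=2$, $\delta=1$ and $n=2$, so $X$ is a conic and $p=2$; the only other possibility $d=1$ is a line. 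The technical heart is this last step, equivalent to the purely polynomial fact that an irreducible plane curve whose defining form $F$ satisfies $\partial_Z F\equiv 0$ (so $F$ is a polynomial in $X,Y,Z^p$) and has degree $\ge 3$ must be singular; one proves it by combining Euler's relation modulo $p$ with a dimension count showing the candidate singular locus is non-empty unless $F$ factors.
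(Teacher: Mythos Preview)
The paper does not prove this theorem at all; it is quoted as a classical result of Samuel, with pointers to \cite{Samuel}, \cite[Theorem IV.3.9]{Hart} and \cite[Remark (2.2)]{BH}. So there is no in-paper proof to compare against, only the standard literature argument.

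Your overall strategy---inseparability of the projection from the nucleus, rationality of $X$, then a parametric analysis---is sound and can be made to work, but the final step as written is not a proof and contains a confusion. You propose to ``impose the immersion condition at the point lying over the nucleus''; however, one must first show (and it is part of the conclusion) that the nucleus does \emph{not} lie on a smooth strange non-line, so there is no such point. What actually forces $d\le 2$ is this: since $\partial_t f_j\equiv 0$ for $j\ge 1$, the immersion condition on the chart $\{s\neq 0\}$ requires $\partial_t f_0$ to be nowhere zero there, hence $\partial_t f_0=c\,s^{\,d-1}$; symmetrically $\partial_s f_0=c'\,t^{\,d-1}$; feeding these into Euler's relation (and checking $p\mid d$ and $p\nmid d$ separately) rules out $d\ge 3$. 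Your alternative ``purely polynomial fact'' about plane curves with $\partial_Z F\equiv 0$ is true, but it is proved via B\'ezout applied to $F_X,F_Y$ together with Euler (writing $F_X=YA$, $F_Y=-XA$ when $p\mid d$), not by an unspecified ``dimension count''. The throwaway ``up to a harmless unit coming from whether $P$ lies on $X$'' likewise hides a genuine case split.

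For contrast, the standard proof in \cite[Theorem IV.3.9]{Hart} is considerably shorter and sidesteps the entire parametric analysis. After reducing to $n\le 3$ by isomorphic projection, one projects from a general point to a birational nodal plane curve $X'$ and observes that a node $R$ of a strange plane curve is impossible away from the nucleus $P'$: the two distinct branch-tangents at $R$ would both have to equal the line $RP'$. With the projection centre chosen so that no node can occur at $P'$ either, $X'$ is smooth and isomorphic to $X$; then $g(X)=0$ (from the purely inseparable projection to $\mathbb{P}^1$) and the genus--degree formula for smooth plane curves give $d\le 2$ at once. Your route trades this one-line ``no nodes'' observation for a longer computation in coordinates.
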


The genus of a non-strange, non-degenerate curve of degree $d$ in $\mathbb{P}^n$ can be bounded from above in terms of $d$ and $n$. This bound is known as Castelnuovo's bound, see \cite[Theorem 7.111]{HKT}.
\begin{theorem}{(Castelnuovo's bound)} \label{castelnuovo}
Let $C$ be a non-degenerate, irreducible curve in $\mathbb{P}^n(\overline{\mathbb{F}}_q)$ of degree $d$ and genus $g$. If $C$ is not strange, then

$$
g \leq  \frac{1}{2} m(m-1)(n-1) + m\epsilon,
$$

where $m =\lfloor (d-1)/(n-1)\rfloor$ and $\epsilon=d-1-m(n-1)$.
\end{theorem}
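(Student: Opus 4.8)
The plan is to carry out the classical Castelnuovo argument, bounding $g$ by studying a general hyperplane section of $C$. Replacing $C$ by its nonsingular model if needed, let $H\subset\mathbb{P}^n(\overline{\mathbb{F}}_q)$ be a general hyperplane and put $\Gamma:=C\cap H$, viewed both as an effective divisor of degree $d$ on $C$ and as a set of $d$ points in $H\cong\mathbb{P}^{n-1}$. The geometric input I would invoke is that, since $C$ is irreducible, non-degenerate and \emph{not strange}, a general such $\Gamma$ is reduced and lies in linearly general position in $\mathbb{P}^{n-1}$, that is, every $n$ of its points span $H$; this is the uniform position principle in characteristic zero and its counterpart for non-strange curves in positive characteristic. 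I expect this to be the main obstacle and the only place where the hypothesis ``$C$ not strange'' is genuinely used, since for a strange curve a general hyperplane section can fail to be reduced or to be in general position; everything afterwards is formal.

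Granting this, the combinatorial heart is Castelnuovo's lemma: if $\Gamma$ is a set of $d$ points in linearly general position in $\mathbb{P}^{n-1}$ and $h_\Gamma(k)$ denotes the number of independent conditions that $\Gamma$ imposes on forms of degree $k$, then $h_\Gamma(k)\ge\min\{d,\,k(n-1)+1\}$ for all $k\ge 1$. I would prove this constructively: to show that a subset $\{P_0,\dots,P_s\}$ with $s=\min\{d,\,k(n-1)+1\}-1$ imposes independent conditions, for each $i$ one exhibits a form of degree $k$ vanishing at every $P_j$, $j\ne i$, but not at $P_i$, taken as a product of $k$ hyperplanes, each passing through $n-1$ of the $P_j$'s (possible by linear general position) and avoiding $P_i$.

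Next, put $d_k:=h^0(C,\mathcal{O}_C(kH))-h^0(C,\mathcal{O}_C((k-1)H))$. Since $\Gamma$ is the reduced hyperplane section, tensoring $0\to\mathcal{O}_C(-H)\to\mathcal{O}_C\to\mathcal{O}_\Gamma\to 0$ with $\mathcal{O}_C(kH)$ yields $0\to\mathcal{O}_C((k-1)H)\to\mathcal{O}_C(kH)\to\mathcal{O}_\Gamma\to 0$, so $d_k$ is the rank of the restriction map $H^0(C,\mathcal{O}_C(kH))\to H^0(\Gamma,\mathcal{O}_\Gamma)$. This rank is at least that of $H^0(\mathbb{P}^n,\mathcal{O}(k))\to H^0(\Gamma,\mathcal{O}_\Gamma)$, which equals $h_\Gamma(k)$ because degree-$k$ forms on $\mathbb{P}^n$ restrict surjectively onto degree-$k$ forms on $H$. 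Hence $d_k\ge\min\{d,\,k(n-1)+1\}$ for $k\ge 1$, while $d_0=1$ since $C$ is irreducible, and $d_k=d$ for $k\gg 0$ by Riemann--Roch. As $h^0(C,\mathcal{O}_C(kH))=1+\sum_{j=1}^{k}d_j$ and equals $kd+1-g$ for $k\gg 0$, we get $g=\sum_{k\ge 1}(d-d_k)$, so
$$g\le\sum_{k\ge 1}\max\{0,\,d-1-k(n-1)\}.$$

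Finally, with $m=\lfloor(d-1)/(n-1)\rfloor$ and $\epsilon=d-1-m(n-1)\in\{0,\dots,n-2\}$, we have $d-1-k(n-1)=(m-k)(n-1)+\epsilon\ge 0$ for $1\le k\le m$ and $\le 0$ for $k>m$, hence
$$g\le\sum_{k=1}^{m}\big((m-k)(n-1)+\epsilon\big)=(n-1)\,\frac{m(m-1)}{2}+m\epsilon=\frac{1}{2}\,m(m-1)(n-1)+m\epsilon,$$
as claimed.
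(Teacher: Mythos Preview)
The paper does not give its own proof of this statement: it appears in the Preliminaries section as a quoted result, with the citation ``see \cite[Theorem 7.111]{HKT}''. So there is no in-paper argument to compare against.

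Your proposal is the classical Castelnuovo argument and is correct in outline. A couple of small comments. First, ``replacing $C$ by its nonsingular model'' should be read as working on the normalization $\nu:\tilde C\to C\subset\mathbb{P}^n$ and pulling back hyperplane sections; the genus in the statement is the geometric genus, so this is the right move, but strictly speaking hyperplane sections live on $C$ and become divisors on $\tilde C$ via $\nu^*$. Second, you correctly isolate the only place where ``not strange'' is used: the fact that a general hyperplane section consists of $d$ points in linearly general position in $H\cong\mathbb{P}^{n-1}$. This is exactly the positive-characteristic substitute for the uniform position lemma and is what \cite{HKT} establishes en route to their Theorem~7.111; note that you only need \emph{linear} general position (any $n$ of the points span $H$), not full uniform position, and that is precisely what the hypothesis buys. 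The remaining steps---Castelnuovo's combinatorial lemma on conditions imposed by points in linear general position, the telescoping identity $g=\sum_{k\ge 1}(d-d_k)$, and the final arithmetic---are standard and carried out correctly.
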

A corollary of this theorem is that if a non-degenerate, irreducible curve violates the Castelnuovo bound, then it is a strange curve.

\section{Proof of the conjecture for $n=3$}

In this section we prove Conjecture \ref{conj} for $n=3$. We start by showing that the conjecture implies Theorem \ref{main}.
\begin{lemma}\label{conj_implies_thm}
If Conjecture \ref{conj} is valid for $n=3$, then Theorem \ref{main} is valid as well.
\end{lemma}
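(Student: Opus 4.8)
The plan is to reduce the bound for a general non-degenerate curve $C$ in $\mathbb{P}^3$ to the absolutely irreducible, non-degenerate case, which is exactly what Conjecture~\ref{conj} handles for $n=3$. Write $C = C_1 \cup \cdots \cup C_r$ as the union of its $\mathbb{F}_q$-irreducible components, with $\deg C_i = d_i$, so that $\sum_{i=1}^r d_i = d$. Each $\mathbb{F}_q$-irreducible component $C_i$ either splits over $\overline{\mathbb{F}}_q$ into several conjugate absolutely irreducible curves or is itself absolutely irreducible; in the former case the $\mathbb{F}_q$-rational points of $C_i$ lie on the (lower-degree) intersection of the Galois conjugates, and in particular $|C_i(\mathbb{F}_q)|$ is very small — this should be bounded by something like a constant or $O(d_i)$ via a B\'ezout-type estimate, so such components are harmless. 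So the crux is to control the absolutely irreducible components.

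For the absolutely irreducible components I would split into cases according to whether the component is non-degenerate in $\mathbb{P}^3$, spans a plane, or spans a line. By hypothesis, if $C$ is not absolutely irreducible it contains no $\mathbb{F}_q$-lines, so in the reducible case no component is a line; if $C$ \emph{is} absolutely irreducible the statement is immediate from Conjecture~\ref{conj} for $n=3$, since then $d \geq 3$ and $|C(\mathbb{F}_q)| \leq q(d-2)+1$. For a non-degenerate absolutely irreducible component $C_i$ (degree $d_i \geq 3$), Conjecture~\ref{conj} gives $|C_i(\mathbb{F}_q)| \leq q(d_i-2)+1$. For a plane absolutely irreducible component $C_i$ of degree $d_i$, Theorem~\ref{plane} gives $|C_i(\mathbb{F}_q)| \leq q(d_i-1)+1$ (the exceptional curve $K$ lives over $\mathbb{F}_4$ and is a possibility I must either rule out or absorb into the slack below), and a conic or line among the $C_i$ contributes at most $q+1$ points. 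The key bookkeeping point: the hypothesis guarantees at least one non-degenerate $\mathbb{F}_q$-irreducible component, call it $C_1$; I will use its $-2$ term to pay for the weaker $-1$ (or $0$) terms coming from plane and low-degree components.

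Concretely, summing over components and using inclusion–exclusion to avoid double-counting points on intersections, I expect
$$
|C(\mathbb{F}_q)| \;\leq\; \sum_{i=1}^r |C_i(\mathbb{F}_q)| \;\leq\; q(d_1-2)+1 + \sum_{i \geq 2}\bigl(q(d_i-1)+1\bigr),
$$
and the right-hand side equals $q(d-2) + r - q(r-2) \cdot (\text{something})$ — here the arithmetic must be arranged so that the surplus $+1$'s and the gap between $-1$ and $-2$ are compensated by the points lying on pairwise intersections $C_i \cap C_j$, of which there are enough (each such intersection is nonempty in a connected curve, or can be bounded below) to bring the total down to $q(d-2)+1$. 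The main obstacle is precisely this combinatorial/intersection-theoretic accounting: making sure that whenever a component forces the weaker $q(d_i-1)+1$ bound there is a genuinely non-degenerate component present and enough forced coincidences among rational points on the various components to absorb the loss, and separately handling the $\mathbb{F}_4$ exceptional plane curve $K$ (which, having $14 = q(d-1)+2$ points with $q=4,d=4$, is the real danger case and must be excluded or dominated using non-degeneracy of $C_1$). The strange-curve and Castelnuovo machinery recalled above is not needed for this lemma itself; it will be used later to establish Conjecture~\ref{conj} for $n=3$, on which this lemma is conditioned.
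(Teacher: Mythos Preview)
Your overall decomposition into $\mathbb{F}_q$-irreducible components and the plan to bound each piece separately is exactly what the paper does, but two concrete steps are missing and one step is a wrong turn. First, the wrong turn: you do \emph{not} need any intersection points to make the bookkeeping close. With $C_1$ non-degenerate and the remaining $C_i$ bounded by $q(d_i-1)+1$ (Sziklai for plane components, and the even stronger $q(d_i-2)+1$ for non-degenerate ones), the crude sum already gives
\[
|C(\mathbb{F}_q)|\le q(d_1-2)+1+\sum_{i\ge 2}\bigl(q(d_i-1)+1\bigr)=q(d-r-1)+r\le q(d-2),
\]
for $r\ge 2$ and $q\ge 2$. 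Your plan to recoup losses via ``forced coincidences'' on $C_i\cap C_j$ would in fact fail, since $C$ is not assumed connected and the components may be pairwise disjoint.

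The genuine gaps are the two cases you flag but do not resolve. For an $\mathbb{F}_q$-irreducible but not absolutely irreducible component, a naive ``B\'ezout-type'' bound on the intersection of two Galois conjugates in $\mathbb{P}^3$ only gives something like $(d_i/s)^2$, which need not be $\le q(d_i-1)+1$. The paper instead argues geometrically: any $\mathbb{F}_q$-point lies on every conjugate, hence has multiplicity $\ge t\ge 2$ on $C$; taking the pencil of planes through a line joining two such rational points and applying B\'ezout planewise yields $|C(\mathbb{F}_q)|\le 2+(q+1)(d-2t)/t\le q(d-2)-1$. Second, the exceptional curve $K$ over $\mathbb{F}_4$ really must be handled explicitly: each such component contributes $14=4(d_i-1)+2$, one more than Sziklai, and the paper absorbs this surplus by tracking that each $K$-component has $d_i=4$ and redoing the arithmetic to get $|C(\mathbb{F}_4)|\le 4(d-2)-1$. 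Neither of these steps is automatic, and your proposal does not supply them.
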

\begin{proof}
Assume that Theorem \ref{main} is proven for absolutely irreducible curves. Let $C$ be a non-degenerate curve of degree $d$ in $\mathbb{P}^3$ defined over $\mathbb{F}_q$ which is not absolutely irreducible and with at least one non-degenerate irreducible component.

Assume first that $C$ is irreducible over $\mathbb{F}_q$ but not absolutely irreducible. In this case $C$ can be written as the union of $C_1 \cup \ldots \cup C_t$ where the curves $C_i$ are absolutely irreducible defined over $\mathbb{F}_{q^t}$ and $\deg(C_i)=e \geq 1$ for all $i=1,\ldots,t$.
Note that the curves $C_1,\dots,C_t$ form an orbit under the Frobenius automorphism of $\mathbb{F}_{q^t}$ over $\mathbb{F}_q$.
Then $C(\mathbb{F}_q) \subset C_1 \cap \ldots \cap C_t$ and every point in $C(\mathbb{F}_q)$ is therefore a multiple point of $C$ of multiplicity $t \geq 2$.
Note that $q(d-2)-1\geq q-1 \geq 1$ so either $|C(\mathbb{F}_q)| \le q(d-2)-1$, or $C(\mathbb{F}_q)$ contains at least $2$ distinct points.
 Let $P_1,P_2 \in C(\mathbb{F}_q)$ be distinct and let $\ell$ be the line containing $P_1$ and $P_2$. Denote with $\mathcal{B}(\ell)$ the set of all planes $\pi$ in $\mathbb{P}^3(\mathbb{F}_q)$ containing $\ell$. Then $|\mathcal{B}(\ell)|=q+1$. Since any point in $C(\mathbb{F}_q) \cap \pi$ will have intersection multiplicity at least $t$, B{\'e}zout's theorem implies that for all $\pi \in \mathcal{B}(\ell)$:
$$|(C(\mathbb{F}_q) \setminus \{P_1,P_2\}) \cap \pi| \leq \frac{d-2t}{t}.$$
Note the we can apply B{\'e}zout's theorem. Indeed, since $\pi$ is fixed by the Frobenius map, a component $C_i$ of $C$ could only be contained in $\pi$ if all components $C_i$, and hence $C$ itself, are contained in $\pi$, contradicting the non-degeneracy of $C$.
We conclude that
$$|C(\mathbb{F}_q)| \leq 2+|\mathcal{B}(\ell)|\frac{d-2t}{t}=2+(q+1)\frac{d-2t}{t} \leq q(d-2)-1.$$

Now consider the case in which $C$ is not irreducible over $\mathbb{F}_q$. Since $C$ is line free, it can in this case be written as $C=C_1 \cup C_2 \cup \ldots \cup C_t$, $t \geq 2$ where the curves $C_i$ are $\mathbb{F}_q$-irreducible of degree $d_i \geq 2$ and defined over $\mathbb{F}_q$ for all $i=1,\ldots,t$.
Denote by $k$ the number of components $C_i$ that are degenerate. Then $k \leq t-1$, since we assume that at least one of the irreducible components is non-degenerate.
%
%Assume first that $k=0$. Then from the previous step of the proof,
%$$|C(\mathbb{F}_q)| =\sum_{i=1}^{t} |C_i (\mathbb{F}_q)| \leq \sum_{i=1}^t [q(d_i-2)+1]=q(d-2t)+t \leq q(d-4)+2<q(d-2)+1,$$ and hence $k \geq 1$.
We can assume without loss of generality that the $k$ degenerate components of $C$ are $C_1,\ldots,C_k$. From Theorem \ref{plane} either $|C_i(\mathbb{F}_q)| \leq (d_i-1)q+1$ or $q=4$ and $|C_i(\mathbb{F}_4)|=14$ for $i=1,\ldots,k$.
%
%We can assume that $d_t$ is the minimum of the $d_i's$ for $i=k+1,\ldots,t$.
%
If either $q \ne 4$ or $q=4$ but $|C_i(\mathbb{F}_q)| \leq (d_i-1)q+1$ for all $i=1,\ldots,k$ we get
$$|C(\mathbb{F}_q)| =\sum_{i=1}^{t} |C_i (\mathbb{F}_q)| \leq \sum_{i=1}^{t-1} [q(d_i-1)+1]+(d_{t}-2)q+1=q(d-t-1)+t \leq q(d-3)+2\le q(d-2).$$
Thus we assume that $q=4$ and $k_1 \ne 0$ is the number of degenerate components $C_i$ of $C$ with $|C_i(\mathbb{F}_4)|=14$. Relabeling the curves if necessary, we can suppose that these curves are $C_1,\ldots,C_{k_1}$. Further, since by Theorem \ref{plane} all these $C_i$ are isomorphic to the curve $K$ mentioned there, we have $d(C_i)=4$ for $1 \le i \le k_1$.
%\leq (d_i-1)(d_i-2)/2$ we get that $d_i \geq 4$ for all $i=1,\ldots,k_1$. If $d_i \geq 5$ then $|C_i(\mathbb{F}_4)|=14 <4(d_i-1)+1$ and hence we can assume that $d_i=4$ for all $i=1,\ldots,k_1$ up to redefine $k_1$ as the number of irreducible components for which the bound $|C_i(\mathbb{F}_4)| \leq 4(d_i-1)+1$ does not hold.
%
%If $k=k_1$ we have
%\begin{eqnarray}
%|C(\mathbb{F}_4)|  &=&  \sum_{i=1}^{k} |C_i (\mathbb{F}_4)|+\sum_{i=k+1}^{t} |C_i (\mathbb{F}_4)| \leq 14 k+\sum_{i=k+1}^{t} [4(d_i-2)+1]=14k+4(d-4k)-7(t-k) \nonumber \\
%&=&4(d-2)+1+[7+5k-7t] \leq 4(d-2)+1+[7+5(t-1)-7t]<4(d-2)+1.\nonumber
%\end{eqnarray}
%So  $k > k_1$ and hence
Since $k_1 \le k \le t-1$ and $t \ge 2$, we obtain
\begin{eqnarray}
|C(\mathbb{F}_4)| &=& 14k_1+\sum_{i=k_1+1}^{k} |C_i (\mathbb{F}_4)|+\sum_{i=k+1}^{t} |C_i (\mathbb{F}_4)| \nonumber \\
 & \leq & 14k_1+\sum_{i=k_1+1}^{t-1} [4(d_i-1)+1]+4(d_t-2)+1 \nonumber \\
&=&14k_1+4(d-4k_1)-3(t-k_1-1)-7 \nonumber \\
 & = & 4(d-2)+1+k_1-3t+3\le 4(d-2)-1. \nonumber
\end{eqnarray}
\end{proof}

\begin{rem}
The proof of Lemma \ref{conj_implies_thm} actually shows that if Conjecture \ref{conj} is valid, then equality in the bound of Theorem \ref{main} can only be obtained by absolutely irreducible curves, or possibly by reducible curves containing an $\mathbb{F}_q$-rational line.
\end{rem}

From now we assume that $C$ is absolutely irreducible and set out to prove Conjecture \ref{conj} for $n=3$. We first prove that the conjecture holds for general $n$ whenever the degree $d$ is large enough with respect to $q$ and $n$.

\begin{lemma} \label{lem1}
Let $n \ge 3$ be an integer and $C \subset \mathbb{P}^n$ a non-degenerate, absolutely irreducible curve defined over $\mathbb{F}_q$ of degree $d$. If $d \geq (n-1)q+1$, then $|C(\mathbb{F}_q)| \le q(d-n+1)$. If $d=(n-1)q$, then $|C(\mathbb{F}_q)| \le q(d-n+1)+1.$
\end{lemma}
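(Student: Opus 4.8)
The plan is to deduce the entire statement from Homma's bound (Theorem~\ref{thm:homma}) together with the observation, recorded just after it, that the quantity on its right-hand side may be replaced by its floor; what remains is then elementary arithmetic. Write
$$\lambda:=\frac{(q-1)(q^{n+1}-1)}{q(q^n-1)-n(q-1)},$$
so that Theorem~\ref{thm:homma} gives $|C(\mathbb{F}_q)|\le\lfloor\lambda d\rfloor$. The first step is to record the identity $q-\lambda=N/D$ with
$$N=q^{n+1}-1-(n+1)q(q-1),\qquad D=q(q^n-1)-n(q-1),$$
and to note that $N>0$ and $D>0$ for every integer $n\ge 3$ and every prime power $q\ge 2$; for $N$ this amounts to $q^{n+1}>1+(n+1)q(q-1)$, which already holds at $n=3$ and then only improves.

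Next I would dispose of the case $d\ge(n-1)q+1$. Since $|C(\mathbb{F}_q)|$ and $q(d-n+1)$ are integers, it suffices to prove $\lambda d<q(d-n+1)+1$; as $q>\lambda$ this is equivalent to $d>(q(n-1)-1)/(q-\lambda)$, so—using the hypothesis $d\ge(n-1)q+1$—it is enough to verify $(n-1)q+1>(q(n-1)-1)/(q-\lambda)$. Clearing the positive denominators and setting $m:=n-1\ge 2$, this becomes $(mq+1)N-(mq-1)D>0$, and expanding with $N-D=-(m+2)(q-1)^2$ and $N+D=2q^{m+2}+m-(m+2)q^2$ the left-hand side equals
$$f(m,q):=2q^{m+2}+m-(m+2)q\big(q+m(q-1)^2\big).$$

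For the remaining case $d=(n-1)q$ one has $d-n+1=m(q-1)$, so it suffices to show $\lambda d<qm(q-1)+2$; since $\lambda-(q-1)=(m+2)(q-1)^2/D$, this reduces to
$$g(m,q):=2q^{m+2}+2(m+1)-(m+2)q\big(m(q-1)^2+2\big)>0,$$
and a short computation gives $g(m,q)-f(m,q)=(m+2)(q-1)^2\ge 0$. Hence both cases are subsumed by the single inequality $f(m,q)>0$ for all integers $m\ge 2$ and prime powers $q\ge 2$. To finish, I would check directly that $f(2,q)=2(q-1)^4>0$ and that
$$f(m+1,q)-f(m,q)=2q^{m+2}(q-1)+1-q^2-(2m+3)q(q-1)^2>0\qquad(m\ge2,\ q\ge2),$$
the point being that the term $2q^{m+2}(q-1)$ already outweighs the rest at $m=2$; telescoping then yields $f(m,q)\ge f(2,q)>0$.

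The main obstacle is precisely this last elementary inequality $f(m,q)>0$: it is genuinely tight for small parameters (for instance $f(2,2)=2$), so the estimates must be carried out with a little care, in particular the monotonicity step $f(m+1,q)>f(m,q)$. Everything else is a direct, if somewhat lengthy, manipulation of Homma's bound, and I do not anticipate any deeper difficulty.
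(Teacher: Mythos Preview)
Your proposal is correct and follows essentially the same approach as the paper: both deduce the lemma directly from Homma's bound (Theorem~\ref{thm:homma}) and reduce the remaining work to an elementary inequality. The only organizational difference is that the paper first derives an explicit threshold $d\ge (n-1)q+\text{(small fraction)}$ and then treats the finitely many pairs $(q,n)$ with $n\in\{3,4\}$, $q\le 7$ separately via the floor, whereas you absorb the floor from the outset and prove the single uniform inequality $f(m,q)>0$ by induction on $m$, avoiding any case check.
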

\begin{proof}
The upper bound on $|C(\mathbb{F}_q)|$ in Theorem \ref{thm:homma} is less than or equal to
$q(d-n+1)$ if
$$d \ge (n-1)q+\dfrac{q(q-1)(n^2-1)}{q^n+\cdots+q^2-nq+1}$$
and less than or equal to $q(d-n+1)+1$ if
$$d \ge (n-1)q-1+\dfrac{((n-1)q-1)(q-1)(n+1)}{q^n+\cdots+q^2-nq+1}.$$
Using that $n \ge 3$ and $q \ge 2$ is a prime power, the lemma follows immediately, except for $n \in\{3,4\}$ and $q \le 7$. For these values of $(q,n)$ the lemma follows by strengthening the upper bound on $|C(\mathbb{F}_q)|$ in Theorem \ref{thm:homma} in a trivial way by taking the floor of it.
\end{proof}
\begin{rem}
Similarly as in the proof of Lemma \ref{lem1}, one can show that $|C(\mathbb{F}_q)| \le q(d-n+1)$ if $d=(n-1)q$ and $(q,n) \not\in \{(2,3),(2,4),(3,3),(3,4),(3,5)\}$.
\end{rem}

Lemma \ref{lem1} implies that Conjecture \ref{conj} is valid for ``large'' degrees, i.e., for $d \ge (n-1)q$. Returning to the case $n=3,$ we could assume from now on that $d \leq 2q-1$. Nonetheless, we will only assume that $d \le 2q$ in order to investigate which type of curves can satisfy the bound of Conjecture \ref{conj} with equality.

One way to estimate the number of points in $C(\mathbb{F}_q)$, is by considering lines through such points.  The following lemma shows that Conjecture \ref{conj} is true if there exists a line intersecting the curve in at least four distinct points.
 \begin{lemma} \label{lem2}
Let $C$ be an absolutely irreducible, non-degenerate curve in $\mathbb{P}^3$ defined over $\mathbb{F}_q$ of degree $d \le 2q$. If there exists an $\mathbb{F}_q$-rational line $\ell$ such that $|\ell \cap C(\overline{\mathbb{F}}_q)|\geq 4$, then $|C(\mathbb{F}_q)| \le q(d-2)$.
\end{lemma}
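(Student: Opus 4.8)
The plan is to count the $\mathbb{F}_q$-rational points of $C$ by sweeping the pencil of $\mathbb{F}_q$-rational planes through the line $\ell$. Since $\ell$ is $\mathbb{F}_q$-rational, it lies in exactly $q+1$ planes $\pi \in \mathcal{B}(\ell)$ defined over $\mathbb{F}_q$, and every $\mathbb{F}_q$-point of $C$ lies on at least one such plane. First I would observe that $\ell \not\subseteq C$: otherwise $\ell$ would be a component of the irreducible curve $C$, forcing $C=\ell$, contradicting $|\ell \cap C(\overline{\mathbb{F}}_q)| \geq 4$ (a line meets itself in infinitely many points, but more to the point $C$ would then be degenerate). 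Hence B\'ezout applies to $C$ and any plane $\pi$ not containing $C$, and no plane contains $C$ by non-degeneracy. By hypothesis there are at least four distinct points of $C(\overline{\mathbb{F}}_q)$ on $\ell$; write $m := |\ell \cap C(\overline{\mathbb{F}}_q)| \geq 4$. Since the intersection cycle $C \cap \ell$ has degree $d$, at least these $m$ points each contribute at least $1$, so the ``spare'' intersection number available to distribute on $C \cap \pi$ away from $\ell$ is at most $d-m \leq d-4$.

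The core estimate is then as follows. Fix $\pi \in \mathcal{B}(\ell)$. Every point of $C(\mathbb{F}_q) \cap \pi$ that does not lie on $\ell$ contributes at least $1$ to $\deg(C\cap\pi) - \deg(C \cap \ell) \le d - m$, hence there are at most $d-m \le d-4$ such points. Summing over the $q+1$ planes of $\mathcal{B}(\ell)$, the points of $C(\mathbb{F}_q)$ not on $\ell$ are each counted exactly once (each lies in a unique plane of the pencil, namely the one spanned by $\ell$ and the point), so
\[
|C(\mathbb{F}_q)| \le |C(\mathbb{F}_q) \cap \ell| + (q+1)(d-m).
\]
Now $|C(\mathbb{F}_q) \cap \ell| \le |C(\overline{\mathbb{F}}_q) \cap \ell| = m$ and also $m \le d$. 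So $|C(\mathbb{F}_q)| \le m + (q+1)(d-m) = (q+1)d - qm$. With $m \ge 4$ this gives $|C(\mathbb{F}_q)| \le (q+1)d - 4q = q(d-2) + (d-2q) \le q(d-2)$ using $d \le 2q$. That is exactly the claimed bound.

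I should double-check the one subtlety: whether the $\mathbb{F}_q$-points of $C$ lying on $\ell$ itself are correctly accounted for. They are included in the term $|C(\mathbb{F}_q)\cap\ell|$ and, since $\ell\subset\pi$ for every $\pi\in\mathcal{B}(\ell)$, they would be multiply counted if I naively summed $|C(\mathbb{F}_q)\cap\pi|$ over the pencil — this is why I split off $C(\mathbb{F}_q)\cap\ell$ and only sum the off-$\ell$ contributions. The only genuine obstacle is making sure B\'ezout is legitimately applicable to $C$ and $\pi$, i.e. that no plane of the pencil contains a component of $C$; but $C$ is absolutely irreducible and non-degenerate, so no plane contains $C$, and this is immediate. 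A minor refinement: one can sharpen $|C(\mathbb{F}_q)\cap\ell|$ to at most $\min(m, q+1)$, but it is not needed. The argument is clean and the inequality $d\le 2q$ is used exactly once at the very end.
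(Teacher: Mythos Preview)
Your argument is correct and is essentially the same as the paper's: count via the pencil $\mathcal{B}(\ell)$ of $q+1$ planes through $\ell$, use B\'ezout on each plane to bound the off-$\ell$ points by $d-m$, and conclude $|C(\mathbb{F}_q)|\le m+(q+1)(d-m)\le q(d-2)$ using $m\ge 4$ and $d\le 2q$. One small wording slip: the phrase ``the intersection cycle $C\cap\ell$ has degree $d$'' is not quite right (a curve and a line in $\mathbb{P}^3$ need not meet in the expected dimension); what you mean, and what you in fact use, is that for each plane $\pi\in\mathcal{B}(\ell)$ the cycle $C\cap\pi$ has degree $d$ and the $m$ points of $\ell\cap C$ each contribute at least $1$ to it.
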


\begin{proof}
Let us write $m:=|\ell \cap C(\overline{\mathbb{F}}_q)|$ and denote with $\mathcal{B}(\ell)$ the set of all planes in $\mathbb{P}^3(\mathbb{F}_q)$ containing $\ell$. From B{\'e}zout's theorem we know that for an arbitrary plane $\pi \in \mathcal{B}(\ell)$,
\begin{equation}\label{lem2_eq}
|\pi \cap [C(\mathbb{F}_q) \setminus \ell]| \leq d-m.
\end{equation}
Since $|\mathcal{B}(\ell)|=q+1$, $|\ell \cap C(\mathbb{F}_q)| \le m$, $m \geq 4$, and $d \leq 2q$, we have $|C(\mathbb{F}_q)| \leq m+(q+1)(d-m) \leq q(d-2).$
\end{proof}

\begin{rem}\label{rem:singular_intersection}
The condition that $|\ell \cap C(\overline{\mathbb{F}}_q)|\geq 4$ can be weakened somewhat. What is needed to prove equation \eqref{lem2_eq}, is that the sum of the intersection multiplicities of any plane $\pi \in \mathcal{B}(\ell)$ with $C$ at the points in $\ell \cap C(\overline{\mathbb{F}}_q)$ is at least $4$. I.e., the condition $|\ell \cap C(\overline{\mathbb{F}}_q)|\ge 4$ in Lemma \ref{lem2} can be replaced by the weaker condition:
\begin{equation}\label{eq:condition4}
\forall \pi \in \mathcal{B}(\ell),  \sum _{P \in \ell \cap C(\overline{\mathbb{F}}_q)} i(\pi \cap C,P) \ge 4.
\end{equation}
The proof of Lemma \ref{lem2} is completely valid if the condition in equation \eqref{eq:condition4} is satisfied.
If for example $\ell$ is the tangent line of $C$ at $P$, then for any $\pi \in \mathcal{B}(\ell)$, we have $i(\pi \cap C,P)\ge 2$, see \cite[Page 4]{SV}. Hence if the tangent line at $P$ intersects $C$ in a further two points, or is a bitangent line of $C$, then equation \eqref{eq:condition4} and hence the conclusion of Lemma \ref{lem2} is valid.

Another case where we can use such ideas is if a point $P \in C(\overline{\mathbb{F}}_q)$ is a singularity of $C$, since then for any plane $\pi$ containing $P$, we have $i(\pi \cap C,P) \ge 2$, see \cite[Page 3]{SV}.

%Let us quickly recall why this is the case: let $\pi$ be the zero set of the linear polynomial $L_\pi$ and let $M$ be a linear polynomial whose zero set does not contain $P$. Further, let $Q_1,\dots,Q_s$ be the places of the function field of $C$ corresponding to the singular point $P$. Then $i(\pi \cap C,P)=\sum_{1 \le i \le s} \deg(Q_i) v_{Q_i}(L_\pi/M).$ This implies for example that $i(\pi \cap C,P) \ge \sum_{1 \le i \le s} \deg(Q_i) \ge s$. If $P$ is a cusp, i.e. $s=1$, then $v_{Q_1}(L_\pi/M) \ge 2$, since otherwise $L_\pi/M$ would be a local parameter and hence $P$ not a singular point.
\end{rem}

Considering lines through one particular point $P$, leads to projection of $C$ on a plane $\pi$ in a natural way.
More precisely: given $P \in C(\mathbb{F}_q)$, let $\pi \subset \mathbb{P}^3$ be an $\mathbb{F}_q$-rational plane not containing $P$.
Then we can define the projection of a point $Q \ne P$ of $C$ on $\pi$ to be the intersection point of $\pi$ and the line connecting $P$ and $Q$.
This gives rise to a map $\rho_P: C\setminus\{P\} \rightarrow \pi$.  The projection $C^\prime$ of $C$ on $\pi$ is then defined to be the Zariski closure in $\pi$ of the image of $\rho_P$, that is $C^\prime:=\overline{\rho_P(C\setminus\{P\})}$.
If $P$ is a non-singular point, the domain of the map can be extended to include $P$ in a natural way, by defining $\rho_P(P)$ as the intersection of $\pi$ and the tangent line of $C$ at $P$. However, if $P$ is a singular point of $C$, it may not be possible to extend the domain of $\rho_P$ to $C$. For example, the singular point $P$ may have several branches with distinct tangent lines, in which case several points of $\pi$ are equally valid as image of $P$ under $\rho_P$. Taking the Zariski closure of $\rho_P(C\setminus\{P\})$ takes care of this problem and in a sense adds all possible images of $P$ under $\rho_P$ at the same time.
Either way, in general, we only have a rational map $\rho_P: C \to C^\prime$.
Generically this map would be birational, but there are cases in which its degree is strictly larger than one \cite{Ballico}.
Note that if $C$ is a non-degenerate curve, it is not a line so that the rational map $\rho_P$ is not constant.
Finally, we observe that since $C$ is absolutely irreducible, so is $C^\prime$.

We will now investigate the degree of the curve $C^\prime$. It is well known that the degree of $C^\prime$ is at most $d-1$. Indeed, a line $\lambda \subset \pi$ contains at most $d-1$ points when counted with multiplicity, since the intersection of $C$ and the plane spanned by $\lambda$ and $P$ contains $d$ points when counted with multiplicity, the extra point being $P$. We will use a more general result though, which we state and prove now.

\begin{lemma}\label{lem:projection_map}
Let $C$ be an absolutely irreducible, non-degenerate curve in $\mathbb{P}^3$ defined over $\mathbb{F}_q$ of degree $d$. Further, let $P \in C(\mathbb{F}_q)$ be a point, $\pi \subset \mathbb{P}^3$ be an $\mathbb{F}_q$-rational plane not containing $P$, and $C^\prime$ the projection of $C$ on $\pi$. Assume that the curve $C^\prime$ has degree $d^\prime$ and that the rational map $\rho_P: C \to C^\prime$ has degree $\delta$. Then
$$d \ge
\begin{cases}
1+\delta d^\prime, & \text{if $P$ is non-singular},\\
2+\delta d^\prime, & \text{if $P$ is singular}.
\end{cases}
$$
\end{lemma}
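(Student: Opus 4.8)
The plan is to analyze a general fiber of the projection map $\rho_P$ via an intersection-theoretic count of $C$ against a well-chosen plane. Fix a general point $Q' \in C'$, so that $\rho_P^{-1}(Q')$ consists of exactly $\delta$ distinct points of $C$, none of which is $P$, and moreover $Q'$ is a smooth point of $C'$ lying on no other branch. Let $\lambda$ be the line in $\mathbb{P}^3$ through $P$ and $Q'$; by construction the $\delta$ preimages $Q_1,\dots,Q_\delta$ all lie on $\lambda$, since $\rho_P(Q_i)=Q'$ means precisely that $Q_i$, $P$, $Q'$ are collinear. Now I would pick any $\mathbb{F}_q$- (or $\overline{\mathbb{F}}_q$-) rational plane $\pi_0$ containing $\lambda$; B\'ezout's theorem gives $\deg(\pi_0 \cap C) = d$, counted with intersection multiplicities, and since $C$ is non-degenerate it is not contained in $\pi_0$, so B\'ezout applies. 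The points $P, Q_1,\dots,Q_\delta$ all lie in $\pi_0 \cap C$, so the crude count already yields $d \ge 1 + \delta$. To get the sharper bound with $d'$ in it, I would instead intersect $C$ not with a plane but use $\lambda$ more carefully: the key observation is that every point of $C'$ pulls back to contribute, so I want to realize $\delta d'$ as (a lower bound for) the intersection number of $C$ with a plane.

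The cleaner route is the following. Take a general line $\mu \subset \pi$ (the target plane of the projection). Since $\deg C' = d'$ and $\mu$ is general, $\mu \cap C'$ consists of $d'$ distinct smooth points of $C'$, each a genuine point in the image of $\rho_P$. The preimage $\rho_P^{-1}(\mu \cap C')$ then consists of $\delta d'$ distinct points of $C$ (using genericity of $\mu$ so that all these fibers are generic fibers of size $\delta$, and none equals $P$). Let $H$ be the plane in $\mathbb{P}^3$ spanned by $\mu$ and $P$ — equivalently, $H$ is the preimage cone of $\mu$ under projection from $P$, intersected with a plane; concretely $H = \overline{\rho_P^{-1}(\mu)} \cup \{P\}$, which is a plane because $\rho_P^{-1}(\mu)$ together with $P$ is swept out by the lines through $P$ meeting $\mu$, and those lines all lie in the plane $\langle \mu, P\rangle$. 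Every point $R$ with $\rho_P(R) \in \mu$ satisfies $R \in H$: indeed $\rho_P(R) \in \mu$ means the line $PR$ meets $\mu$, hence $PR \subset H$, hence $R \in H$. Therefore the $\delta d'$ distinct points of $\rho_P^{-1}(\mu \cap C')$ all lie in $H \cap C$, and $P$ also lies in $H \cap C$ and is distinct from all of them. Since $C$ is non-degenerate, $C \not\subseteq H$, so B\'ezout's theorem gives $d = \deg(H \cap C) \ge \sum_{T \in H \cap C} i(H \cap C, T) \ge 1 + \delta d'$, where the $1$ comes from $P$ and the $\delta d'$ from the fiber points (each contributing at least $1$). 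This proves the non-singular case.

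For the singular case, I would simply upgrade the contribution of $P$ in the last B\'ezout inequality from $1$ to $2$: if $P$ is a singular point of $C$, then for \emph{any} plane $H$ through $P$ one has $i(H \cap C, P) \ge \operatorname{mult}_P(C) \ge 2$, by the standard fact recalled in Remark \ref{rem:singular_intersection} (see \cite[Page 3]{SV}). Since the plane $H$ constructed above passes through $P$, and $P$ is still distinct from the $\delta d'$ fiber points (these were chosen in a generic fiber, while a singular point is a fixed special point and generic choice of $\mu$ avoids having it in the fiber), B\'ezout now yields $d \ge 2 + \delta d'$.

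The main obstacle to watch out for is making the genericity arguments airtight: I must ensure (i) that a general line $\mu \subset \pi$ meets $C'$ in exactly $d'$ reduced points, which is immediate since $C'$ is a plane curve of degree $d'$ and a general line is transverse to it; (ii) that over each such point the fiber of $\rho_P$ really has $\delta$ \emph{distinct} points and avoids $P$ — this follows because the locus of points of $C'$ where the fiber is smaller than $\delta$, or contains the image of $P$'s branch(es), or is non-reduced, is a proper closed (hence finite) subset of $C'$, so a general $\mu$ avoids it entirely; and (iii) that $H = \langle \mu, P\rangle$ is genuinely a plane (not all of $\mathbb{P}^3$), which holds because $P \notin \pi \supseteq \mu$ so $\mu$ and $P$ span exactly a $2$-plane. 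One subtlety: the field of definition of $\mu$ may be a large extension of $\mathbb{F}_q$, but that is harmless since B\'ezout's theorem and all the incidence arguments are valid over $\overline{\mathbb{F}}_q$ and the degrees $d, d', \delta$ are geometric invariants. Once these points are nailed down, the inequality is just one application of B\'ezout to the plane $H$.
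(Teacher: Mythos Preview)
Your argument is correct and follows essentially the same route as the paper: choose a generic line $\mu\subset\pi$, form the plane $H=\langle \mu,P\rangle$ (the paper calls it $\sigma$), and apply B\'ezout to $H\cap C$, accounting for $\delta d'$ points coming from the fibres over $\mu\cap C'$ plus the contribution of $P$. The only cosmetic difference is that the paper packages the count of the $\delta d'$ points via the degree of the zero divisor $(f)_0$ in the function fields $F'\subset F$, whereas you count set-theoretically after arranging that all $d'$ fibres are unramified; both lead to the same B\'ezout inequality.
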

\begin{proof}
If we choose homogeneous coordinates $y_0,\dots,y_3$ for $\mathbb{P}^3$, then after applying a suitable projectivity, we may assume that $P=(0:0:0:1)$, that $\pi$ is given by the equation $y_3=0$. Then the map $\rho_P: C \rightarrow \pi$ is the natural projection $(y_0:y_1:y_2:y_3) \mapsto (y_0:y_1:y_2:0)$. Denoting by $x_i$ the coset $y_i+I \subset \mathbb{F}_q[y_0,\dots,y_3]$, where $I$ is the homogeneous ideal associated to $C$, we can describe the function fields $F$ and $F^\prime$ of $C$ and $C^\prime$ over $\overline{\mathbb{F}}_q$ as $F:=\overline{\mathbb{F}}_q(C)=\overline{\mathbb{F}}_q(x_1/x_0,x_2/x_0,x_3/x_0)$ and $F^\prime:=\overline{\mathbb{F}}_q(C^\prime)=\overline{\mathbb{F}}_q(x_1/x_0,x_2/x_0).$
Note that the degree of the rational map $\rho_P: C \to C^\prime$ equals the degree of the function field extension $\overline{\mathbb{F}}_q(C)/\overline{\mathbb{F}}_q(C^\prime)$.

Now choose $\alpha,\beta,\gamma \in \overline{\mathbb{F}}_q$, not all zero, such that
\begin{enumerate}
\item the intersection point $R$ in $\pi$ of the lines $\lambda$ given by $y_0=0$ and $\mu$ given by $\alpha y_1+\beta y_2+\gamma y_0=0$ is not a point of $C^\prime$,
\item none of the points in $C^\prime \setminus \rho_P(C \setminus\{P\})$ lies on the line $\mu$,
\item none of the singularities of $C^\prime$ lies on $\mu$,
\item the plane spanned by $\mu$ and $P$ does not contain any singularities of $C$, except possibly $P$ itself.
\end{enumerate}
That such $\alpha,\beta,\gamma$ exist is not hard to see: first choose $R$ on $\lambda$, but not on $C^\prime$, then consider all lines $\mu$ contained in $\pi$ through $R$, then the first condition is satisfied. The last three conditions can only prohibit finitely many choices of $\mu$, but since we now work over an algebraically closed field, we have infinitely many possibilities left.
The point of choosing the lines $\mu$ in this way is that the divisor of zeroes of $f$ in $F^\prime$, $(f)_{0}^{F^\prime}$, is equal to the intersection divisor of $C^\prime$ and $\mu$, identifying non-singular points of $C^\prime$ with places of $F^\prime$.
Hence $\deg((f)_{0}^{F^\prime})=d^\prime,$ which implies that $\deg((f)_{0}^{F})=\delta d^\prime$.
The zeroes of the function $f$, seen as element of the function field of $C$, are intersection points of $C$ and the plane $\sigma$ spanned by $\mu$ and $P$. The point $P$ itself is not accounted for, since none of the points in $C^\prime \setminus \rho_P(C \setminus\{P\})$ lies on the line $\mu$. This means that the degree of the intersection divisor of $\sigma$ with $C$ is at least $\deg((f)_{0}^{F^\prime})+i(C \cap \sigma,P)=\delta d^\prime+i(C \cap \sigma,P)$.
The result follows by observing that if $P$ is non-singular, then $i(C \cap \sigma,P) \ge 1$, while if $P$ is singular, then $i(C \cap \sigma,P)\ge 2$.
\end{proof}

%If $P$ is a singular point of $C$, we see that using the same reasoning that $C^\prime$ can have degree at most $d-2$, since the intersection of $C$ with any plane containing $P$ is in that case at least two, see Remark \ref{rem:singular_intersection}.
\begin{lemma} \label{lem3}
Let $C$ be an absolutely irreducible, non-degenerate curve in $\mathbb{P}^3$ defined over $\mathbb{F}_q$ of degree $d$.
Assume that there exists a point $P \in C(\mathbb{F}_q)$ such that every $\mathbb{F}_q$-rational line $\ell$ containing $P$ satisfies $|\ell \cap C(\mathbb{F}_q)| \leq 2$. Moreover assume that if $P$ is a non-singular point, then the tangent line of $C$ at $P$ intersects $C$ in no other rational points. Then $|C(\mathbb{F}_q)| \le q(d-2)+1$. Equality may only occur if $P$ is a non-singular point of $C$.
\end{lemma}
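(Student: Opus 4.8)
The plan is to bound $|C(\mathbb{F}_q)|$ by projecting $C$ from $P$. Fix an $\mathbb{F}_q$-rational plane $\pi$ with $P\notin\pi$ and let $C'=\overline{\rho_P(C\setminus\{P\})}\subset\pi$ be the projected curve, of degree $d'$, and let $\delta$ be the degree of the dominant rational map $\rho_P\colon C\to C'$ (which is positive since $C$, being non-degenerate, is not a line). As noted before the lemma, $C'$ is absolutely irreducible and defined over $\mathbb{F}_q$. Moreover $C'$ is not a line: otherwise every point of $C\setminus\{P\}$ would lie on the line joining $P$ to its image on that line, hence on the $2$-plane spanned by $P$ and $C'$, making $C$ degenerate. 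So $d'\ge 2$, $C'$ has no $\mathbb{F}_q$-linear component, and Theorem \ref{plane} yields $|C'(\mathbb{F}_q)|\le q(d'-1)+1$, the only exception being $q=4$ and $C'\cong K$, where $d'=4$ and $|C'(\mathbb{F}_q)|=14$.

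Next I would show that $\rho_P$ is almost injective on rational points. If $Q_1,Q_2\in C(\mathbb{F}_q)\setminus\{P\}$ have $\rho_P(Q_1)=\rho_P(Q_2)$, then $Q_1$ and $Q_2$ lie on one common $\mathbb{F}_q$-rational line $\ell$ through $P$, so $\{P,Q_1,Q_2\}\subseteq\ell\cap C(\mathbb{F}_q)$; the hypothesis $|\ell\cap C(\mathbb{F}_q)|\le 2$ then forces $Q_1=Q_2$. Since each $\rho_P(Q)$ is an $\mathbb{F}_q$-rational point of $\pi$ lying on $C'$, hence of $C'(\mathbb{F}_q)$, this gives $|C(\mathbb{F}_q)|\le|C'(\mathbb{F}_q)|+1$. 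If $P$ is non-singular, $\rho_P$ extends over $P$ to the point where $\pi$ meets the tangent line of $C$ at $P$, again an $\mathbb{F}_q$-rational point of $C'$; were this point equal to some $\rho_P(Q)$ with $Q\in C(\mathbb{F}_q)\setminus\{P\}$, then $Q$ would lie on that tangent line, contradicting the hypothesis. Hence for $P$ non-singular $\rho_P$ is injective on all of $C(\mathbb{F}_q)$, so $|C(\mathbb{F}_q)|\le|C'(\mathbb{F}_q)|$.

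To bound $d'$ I would invoke Lemma \ref{lem:projection_map}: $d\ge 1+\delta d'$ when $P$ is non-singular and $d\ge 2+\delta d'$ when $P$ is singular; as $\delta\ge 1$, this gives $d'\le d-1$, respectively $d'\le d-2$. Away from the exceptional case this concludes the proof: for $P$ non-singular, $|C(\mathbb{F}_q)|\le|C'(\mathbb{F}_q)|\le q(d'-1)+1\le q(d-2)+1$; for $P$ singular, $|C(\mathbb{F}_q)|\le|C'(\mathbb{F}_q)|+1\le q(d'-1)+2\le q(d-3)+2\le q(d-2)$, using $q\ge 2$, which is strictly below $q(d-2)+1$.

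Finally I would dispose of the case $q=4$, $C'\cong K$, $d'=4$. If $P$ is singular, then $d\ge 2+4\delta\ge 6$, so $|C(\mathbb{F}_4)|\le 14+1=15<4(d-2)+1$. If $P$ is non-singular and $\delta\ge 2$, then $d\ge 9$ and $|C(\mathbb{F}_4)|\le 14<4(d-2)+1$. If $P$ is non-singular and $\delta=1$, then $\rho_P$ is birational, so $d\ge 5$; for $d\ge 6$ again $14<4(d-2)+1$, while $d=5$ is impossible, because $C$ would then be a non-degenerate degree-$5$ curve in $\mathbb{P}^3$ birational to the smooth plane quartic $K$, hence of geometric genus $3$, and on a genus-$3$ curve a divisor of degree $5$ is non-special with $h^0=3$, so it cannot give a non-degenerate map to $\mathbb{P}^3$. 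This establishes $|C(\mathbb{F}_q)|\le q(d-2)+1$ in all cases, and since every singular and every exceptional sub-case gave a strict inequality, equality forces $P$ non-singular. I expect this last step -- ruling out that the Homma--Kim curve $K$ occurs as a low-degree projection of $C$ -- to be the only genuinely delicate point; everything else is a routine projection-and-counting argument, with Lemma \ref{lem:projection_map} doing the heavy lifting on degrees.
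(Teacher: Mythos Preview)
Your argument is correct and follows the paper's proof closely through the projection setup, the injectivity count, and case (i). The only divergence is in the residual sub-case $q=4$, $C'\cong K$, $\delta=1$, $d=5$, $P$ non-singular. There the paper observes that $g(C)\ge g(C')=3$ violates Castelnuovo's bound (Theorem~\ref{castelnuovo}) for a degree-$5$ curve in $\mathbb{P}^3$, deduces that $C$ is strange, and then runs a separate nucleus argument (splitting on whether the nucleus equals $P$) to finish. Your Riemann--Roch observation---that a degree-$5$ divisor on a genus-$3$ curve is non-special with $h^0=3$, so cannot furnish four independent sections needed for a non-degenerate map to $\mathbb{P}^3$---kills this sub-case outright, without invoking Theorems~\ref{strange} or~\ref{castelnuovo} at all. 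This is a genuine simplification: the paper's detour through strangeness is unnecessary here, though its machinery is set up for use elsewhere in the paper. Either way the conclusion and the equality statement are the same.
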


\begin{proof}
%Let $\pi$ be a plane not containing $P$ and let $\rho_P$ denote the projection $\rho_P: C\setminus\{P\} \rightarrow \pi$ mapping a point $Q$ of $C$ to the intersection of $\pi$ and the line connecting $P$ and $Q$. Denote by $C^\prime$ the Zariski closure in $\mathbb{P}^3$ of the image of this projection, that is $C^\prime:=\overline{\rho_P(C)}$. It is an absolutely irreducible plane curve of degree at most $d-1$. Indeed it is absolutely irreducible, since $C$ is, while the intersection of $C^\prime$ and a line $\lambda \subset \pi$ contains at most $d-1$ points when counted with multiplicity, since the intersection of $C$ and the plane spanned by $\lambda$ and $P$ contains $d$ points when counted with multiplicity, $P$ being one of them. If $P$ is a singular point of $C$, we see using the same reasoning, that $C^\prime$ can have degree at most $d-2$, since the intersection of $C$ with any plane containing $P$ is in that case at least two, see Remark \ref{rem:singular_intersection}.
%The numbers $|C(\mathbb{F}_q)|$ and $|C^\prime(\mathbb{F}_q)|$ are closely related.
Choose $\pi$ a plane not containing $P$. We consider the rational map $\rho_P: C \to C^\prime$ coming from the projection of $C$ from $P$ on $\pi$. As in Lemma \ref{lem:projection_map}, we denote by $d^\prime$ the degree of $C^\prime$ and by $\delta$ the degree of the rational map $\rho_P$.
First of all, note that any point of $C(\mathbb{F}_q)$ different from $P$ is projected to a point of $C^\prime(\mathbb{F}_q)$. Indeed, if $Q \in C(\mathbb{F}_q)$, and $P \neq Q$, the line connecting $P$ and $Q$ is rational and hence intersects $\pi$ in a rational point of $C^\prime$. Since by assumption any line through $P$ contains at most one further $\mathbb{F}_q$-rational point of $C$, the rational points of $C$ distinct from $P$ are mapped injectively to the rational points of $C^\prime$.
If moreover $P$ is non-singular, the projection map can be extended to all of $C$ and $P$ is then projected to a rational point $\rho_P(P)$ of $C^\prime$ as well, namely the intersection point of the tangent line of $C$ at $P$ and the plane $\pi$. Moreover, since the tangent line at $P$ does not contain any other rational points of $C$, $P$ is the only point projected to $\rho_P(P).$
%As in Remark denote by $Q_1,\dots,Q_s$ the places of the function field of $C$ corresponding to $P$, see Remark \ref{rem:singular_intersection}. Then $C^\prime \setminus \rho_P(C)$ contains up to $s$ points over $\overline{\mathbb{F}}_q$, one for each place $Q_i$, though distinct $Q_i$ may give rise to the same point in $C^\prime \setminus \rho_P(C)$ as well.
Combining the above, we see that:
\begin{equation}\label{eq:boundCCprime}
|C(\mathbb{F}_q)| \le
\begin{cases}
|C^\prime(\mathbb{F}_q)|, & \text{if $P$ is non-singular,}  \\
|C^\prime(\mathbb{F}_q)|+1, & \text{if $P$ is singular.} \end{cases}
\end{equation}
At this point, we use Theorem \ref{plane}, distinguishing two cases:
\begin{enumerate}
\item[(i)] $C^\prime$ is not isomorphic as a plane curve  over $\mathbb{F}_q$ to the exceptional curve $K$ described in Theorem \ref{plane}. From Lemma \ref{lem:projection_map}, we see that $d^\prime \le d-1$ and in fact $d^\prime \le d-2$ if $P$ is singular.
    Therefore equation \eqref{eq:boundCCprime} yields
    \begin{equation*}
|C(\mathbb{F}_q)| \le
\begin{cases}
q(d-2)+1, & \text{if $P$ is non-singular,}  \\
q(d-3)+2, & \text{if $P$ is singular.} \end{cases}
\end{equation*}
Note that $q(d-3)+2 \le q(d-2)$, so the bound $q(d-2)+1$ may only be attained if $P$ is a non-singular point of $C$.
\item[(ii)] $q=4$ and $C^\prime$ is isomorphic as a plane curve over $\mathbb{F}_4$ to the exceptional curve $K$. Since $K$ has degree four, we see from Lemma \ref{lem:projection_map} that $d \ge 5$ and in fact $d \ge 6$ if $P$ is a singular point of $C$ or if $\delta>1$. If $d \ge 6$, the same reasoning as above gives that
    $|C(\mathbb{F}_4)| \le |C^\prime(\mathbb{F}_4)|+1 = 15 < 4(d-2)+1$.
    Hence we are left with the case where $\delta=1$, $d=5$, and $P$ is non-singular. Note that the curve $K$ is a non-singular quartic and therefore has genus three. Hence $g(C) \ge g(C^\prime)=3$. Theorem \ref{castelnuovo} then implies that the curve $C$ is strange. Note that by Theorem \ref{strange} the curve $K$, and hence $C^\prime$, is not strange.

    Denote by $N$ the nucleus of $C$. If $N \neq P$, then denote by $\rho_P(N)$ the projection of $N$ on $\pi$. Note that the line $t_P$ connecting $P$ and $N$ is the tangent line of $C$ at $P$, so that $\rho_P(P)=\rho_P(N) \in C^\prime$.
    Let $Q \in C$ be different from $P$ and $N$. If $Q$ is a non-singular point of $C$, denote by $t_Q$ be the tangent line of $C$ at $Q$. Then $\rho_P(t_Q)$ is the line passing through $\rho_P(Q)$ and $\rho_P(N)$ and since $\delta=1$, it is the tangent line of $C^\prime$ at $\rho_P(Q)$. Moreover, again using that $\delta=1$, any non-singular point of $C^\prime$ is the projection of a non-singular point of $C$. This shows that $\rho_P(N)$ is a nucleus of $C^\prime$, which therefore is a strange curve. However, $C^\prime$ is not a strange curve, giving a contradiction.

    If $N=P$ and let $\sigma$ be a plane containing the tangent line $t_P$ of $C$ at $P$. For every other point $Q \ne P$ in $C(\mathbb{F}_4) \cap \sigma$, the intersection multiplicity of $C$ and $\sigma$ at $Q$ is at least $2$, since either $Q$ is a singular point, or $\sigma$ contains the tangent line of $C$ at $Q$, which is the line passing through $P$ and $Q$. Since $d=5$, B{\'e}zout's theorem implies that $|C(\mathbb{F}_4) \cap \sigma| \leq 2$. Considering the set $\mathcal{B}(t_P)$ of planes in $\mathbb{P}^3(\mathbb{F}_4)$ containing $t_P$, we get that $|C(\mathbb{F}_4)| \leq 1+|\mathcal{B}(t_P)|(2-1)=1+5=6$.
\end{enumerate}
\end{proof}

So far we have in Lemmas \ref{lem2} and \ref{lem3} dealt with cases where rational lines intersect $C$ in at least four or at most two rational points.
This leaves the case where for every point $P \in C(\mathbb{F}_q)$ no line through $P$ intersects $C$ in four or more rational points, but there exists a line through $P$ intersecting $C$ in three rational points. %One can also include the case where $P$ is non-singular and the tangent line of $C$ at $P$ intersects $C$ in another rational point.
We deal with this case in the following lemmas.

\begin{lemma} \label{lem3pts}
Let $C$ be an absolutely irreducible, non-degenerate curve in $\mathbb{P}^3$ defined over $\mathbb{F}_q$ of degree $d$.
Assume that there exists a point $P \in C(\mathbb{F}_q)$ such that every $\mathbb{F}_q$-rational line $\ell$ containing $P$ satisfies $|\ell \cap C(\mathbb{F}_q)| \leq 3$. %Also assume that if $P$ is non-singular, then the tangent line of $C$ at $P$ intersects $C$ in at most one further rational point of $C$.
Further let $\pi$ be an $\mathbb{F}_q$-rational plane not containing $P$ and denote by $C^\prime$ the projection of $C$ on $\pi$. If the degree of the induced rational map $\rho_P: C \to C^\prime$ is at least two, then $|C(\mathbb{F}_q)| \le q(d-2)$.
\end{lemma}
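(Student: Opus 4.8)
The plan is to exploit the projection of $C$ from $P$ set up in the discussion preceding the statement. Write $C'$ for the projection of $C$ on $\pi$, let $\rho_P\colon C\to C'$ be the induced rational map, and put $d'=\deg C'$ and $\delta=\deg\rho_P\ge 2$. Since $C$ is non-degenerate, $C'$ cannot be a line (otherwise $C$ would be contained in the plane spanned by $P$ and that line), so $d'\ge 2$, and hence $C'$, being absolutely irreducible of degree at least two, has no $\mathbb{F}_q$-linear component. The first key step is the inequality
\[
|C(\mathbb{F}_q)|\le 2\,|C'(\mathbb{F}_q)|+1 .
\]
To prove it, note that every $Q\in C(\mathbb{F}_q)\setminus\{P\}$ is sent by $\rho_P$ to the $\mathbb{F}_q$-rational point $\overline{PQ}\cap\pi$ of $C'$, and that for a fixed $Q'\in C'(\mathbb{F}_q)$ any preimage $Q\ne P$ of $Q'$ lies on the $\mathbb{F}_q$-rational line $\ell=\overline{PQ'}$, which already passes through $P\in C(\mathbb{F}_q)$; by hypothesis $\ell$ then carries at most two rational points of $C$ besides $P$, so the map $C(\mathbb{F}_q)\setminus\{P\}\to C'(\mathbb{F}_q)$ has all fibres of size at most $2$.

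Next I would feed in the degree information. Since $\delta\ge 2$, Lemma~\ref{lem:projection_map} gives $2d'\le\delta d'\le d-1$ if $P$ is non-singular and $2d'\le\delta d'\le d-2$ if $P$ is singular. Assume first that $C'$ is not isomorphic over $\mathbb{F}_q$ to the exceptional curve $K$ of Theorem~\ref{plane} (which is automatic unless $q=4$); then that theorem applies to $C'$ and yields $|C'(\mathbb{F}_q)|\le q(d'-1)+1$. Combining this with the inequality above and the degree bounds gives
\[
|C(\mathbb{F}_q)|\le 2q(d'-1)+3\le
\begin{cases}
q(d-3)+3, & P\ \text{non-singular},\\
q(d-4)+3, & P\ \text{singular}.
\end{cases}
\]
Now $q(d-4)+3\le q(d-2)$ for every prime power $q\ge 2$, and $q(d-3)+3\le q(d-2)$ as soon as $q\ge 3$, so the claim follows in all of these cases.

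What remains are two exceptional situations, and here I would invoke the assumption $d\le 2q$ that is in force from earlier in this section. If $q=2$ and $P$ is non-singular, then $d\le 4$, so $d-1\le 3<4\le\delta d'$ (using $\delta\ge 2$ and $d'\ge 2$), contradicting $\delta d'\le d-1$; thus this case does not occur. If $q=4$ and $C'$ is isomorphic to $K$, then $d'=\deg K=4$, and Lemma~\ref{lem:projection_map} forces $d\ge 1+\delta d'\ge 9>2q$, again a contradiction; so this case does not occur either. The steps I expect to demand the most care are the justification of the fibre-counting inequality and the verification that Theorem~\ref{plane} genuinely applies to $C'$ (which hinges on the reduction $d'\ge 2$), together with the check that the two exceptional cases are truly vacuous once $d\le 2q$ is assumed; the numerical estimates themselves are routine.
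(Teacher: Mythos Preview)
Your argument is correct and follows the same approach as the paper's: bound $|C(\mathbb{F}_q)|$ in terms of $|C'(\mathbb{F}_q)|$ via the at-most-three-secant hypothesis, then combine Theorem~\ref{plane} with the degree inequality $\delta d'\le d-1$ (resp.\ $d-2$) from Lemma~\ref{lem:projection_map}. The one difference is that the paper first invokes Remark~\ref{rem:singular_intersection} (which, like your residual cases, rests on the standing assumption $d\le 2q$ through Lemma~\ref{lem2}) to assume the tangent at a non-singular $P$ meets $C$ in at most one further rational point; this yields the sharper inequality $|C(\mathbb{F}_q)|\le 2|C'(\mathbb{F}_q)|$ in the non-singular case, so the chain $2(q(d'-1)+1)\le q\delta d'-2q+2\le q(d-1)-2q+2\le q(d-2)$ works for every $q\ge 2$ without your separate $q=2$ step, and the $q=4$ exceptional case is closed by the direct estimate $|C(\mathbb{F}_4)|\le 28=4(9-2)\le 4(d-2)$ rather than by contradiction with $d\le 2q$.
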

\begin{proof}
 As before, denote by $d^\prime$ the degree of the curve $C^\prime$ and by $\delta$ the degree of $\rho_P$.
 By Remark \ref{rem:singular_intersection}, we may assume that if $P$ is non-singular, then the tangent line of $C$ at $P$ intersects $C$ in at most one further rational point of $C$. Since any other line through $P$ contains at most $2$ rational points of $C \setminus \{P\}$, we see that
\begin{equation*}
|C(\mathbb{F}_q)| \le
\begin{cases}
2|C^\prime(\mathbb{F}_q)|, & \text{if $P$ is non-singular,}  \\
2|C^\prime(\mathbb{F}_q)|+1, & \text{if $P$ is singular.} \end{cases}
\end{equation*}
The $1$ on the right-hand side of the second case account for the contribution of the point $P$.

This implies that if $C^\prime$ is not isomorphic to the exceptional curve $K$, then using Lemma \ref{lem:projection_map}, we find that
$$|C(\mathbb{F}_q)| \le 2(q(d^\prime-1)+1) \le q\delta d^\prime -2q+2 \le q(d-1)-2q+2\le q(d-2),$$
if $P$ is non-singular, and
$$|C(\mathbb{F}_q)| \le 2(q(d^\prime-1)+1)+1 \le q\delta d^\prime -2q+3 \le q(d-2)-2q+3< q(d-2),$$
if $P$ is singular.

If $C^\prime$ is isomorphic to the exceptional curve $K$, then $d^\prime=4$ so that $d \ge 9$ if $P$ is non-singular and $d \ge 10$ if $P$ is singular by Lemma \ref{lem:projection_map}.
Therefore
$|C(\mathbb{F}_4)| \le 2\cdot14 = 4(9-2)$, if $P$ is non-singular, and $|C(\mathbb{F}_4)| \le 2\cdot14+1 < 4(10-2)$, if $P$ is singular.
\end{proof}

Finally we deal with the case that the rational map $\rho_P: C \to C^\prime$ has degree one.

\begin{lemma} \label{lem4}
Let $C$ be an absolutely irreducible, non-degenerate curve in $\mathbb{P}^3$ defined over $\mathbb{F}_q$ of degree $d$.
Assume that there exists a point $P \in C(\mathbb{F}_q)$ such that every $\mathbb{F}_q$-rational line $\ell$ containing $P$ satisfies $|\ell \cap C(\mathbb{F}_q)| \leq 3$. %Also assume that if $P$ is non-singular, then the tangent line of $C$ at $P$ intersects $C$ in at most one further rational point of $C$.
Further let $\pi$ be an $\mathbb{F}_q$-rational plane not containing $P$ and denote by $C^\prime$ the projection of $C$ on $\pi$. If the induced rational map $\rho_P: C \to C^\prime$ is birational, then $|C(\mathbb{F}_q)| \le q(d-2)-1$.
\end{lemma}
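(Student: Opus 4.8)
The plan is to bound $|C(\mathbb{F}_q)|$ in terms of $|C'(\mathbb{F}_q)|$ by a fiber analysis of $\rho_P$, to control $|C'(\mathbb{F}_q)|$ by Theorem \ref{plane}, and to extract the extra unit beyond $q(d-2)$ from the fact that, under the hypotheses, $C'$ is forced to have an $\mathbb{F}_q$-rational singular point.

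\emph{Preliminaries.} Since $\rho_P$ has degree $1$, Lemma \ref{lem:projection_map} gives $d'\le d-1$ if $P$ is non-singular and $d'\le d-2$ if $P$ is singular. The curve $C'$ is absolutely irreducible and cannot be a line, for otherwise $C$ would lie in the plane spanned by $P$ and that line, against non-degeneracy; hence $d'\ge 2$ and $C'$ is non-degenerate in $\mathbb{P}^2$. By the standing assumption of this part of the section we may also suppose that some $\mathbb{F}_q$-rational line through $P$ meets $C$ in exactly three rational points; this is genuinely needed, since without it the claimed bound fails already for the twisted cubic, for which $\rho_P$ is an isomorphism and which otherwise satisfies all the stated hypotheses.

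\emph{The count.} For an $\mathbb{F}_q$-rational line $\ell\ni P$ set $n(\ell)=|\ell\cap C(\mathbb{F}_q)|\le 3$, so that $|C(\mathbb{F}_q)|=1+\sum_\ell(n(\ell)-1)$, the sum over all $\mathbb{F}_q$-rational lines through $P$. Lines with $n(\ell)\ge 2$ map to pairwise distinct $\mathbb{F}_q$-rational points of $C'$; and if $n(\ell)=3$, say $\ell\cap C(\mathbb{F}_q)=\{P,Q_1,Q_2\}$, then the normalization of $C$ has two distinct points over $\rho_P(Q_1)=\rho_P(Q_2)$, and since birationality identifies this normalization with that of $C'$, the image point is a singular point of $C'$. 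Writing $a,b$ for the numbers of rational lines through $P$ with $n(\ell)=2,3$ and $s$ for the number of $\mathbb{F}_q$-rational singular points of $C'$, we obtain $|C(\mathbb{F}_q)|=1+a+2b$, $a+b\le|C'(\mathbb{F}_q)|$ and $1\le b\le s$, with $a+b\le|C'(\mathbb{F}_q)|-1$ when $P$ is non-singular and its tangent line meets $C$ only at $P$ (then $\rho_P(P)\in C'(\mathbb{F}_q)$ lies outside $\rho_P(C\setminus\{P\})$). Projecting $C'$ onto a line from one of its rational singular points $R_0$ and applying Bézout in $\mathbb{P}^2$ — each rational line of $\pi$ through $R_0$ meets $C'$ at $R_0$ with multiplicity at least $2$, and the $s-1$ lines through $R_0$ and a further rational singular point each lose one more point — gives $|C'(\mathbb{F}_q)|\le 2+(q+1)(d'-2)-s$. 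Combining, $|C(\mathbb{F}_q)|\le 1+|C'(\mathbb{F}_q)|+b\le 3+(q+1)(d'-2)$, one unit less under the tangent-line refinement. With $d'\le d-2$ this already yields $|C(\mathbb{F}_q)|\le q(d-2)-1$ for $d\le 2q$, so the case $P$ singular is complete. For $q=4$ with $C'\cong K$: $K$ is a smooth quartic, so $\rho_P$ has no two-point fibers and $|C(\mathbb{F}_4)|\le|C'(\mathbb{F}_4)|+1=15$; the value $d=5$ is excluded because Castelnuovo's bound would force $C$ strange, contradicting that $C'=K$ is not strange (arguing as in the proof of Lemma \ref{lem3}), while for $d\ge 6$ one has $15\le 4(d-2)-1$.

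\emph{The main difficulty} is $P$ non-singular with $q<d\le 2q$, where $d'=d-1$ is possible and the estimate above only gives $|C(\mathbb{F}_q)|\le q(d-2)-q+d-1$. One then supplements the count with pencil-of-planes estimates along the tangent line at $P$ and along a rational trisecant of $C$ through $P$: if such a line meets $C$ with total intersection multiplicity at least $4$ — in particular if it is tangent to $C$ at one of its points, by Remark \ref{rem:singular_intersection} — running Bézout over the $q+1$ planes containing it gives $|C(\mathbb{F}_q)|\le 3+(q+1)(d-4)\le q(d-2)-1$, using crucially that the line has at most three $\mathbb{F}_q$-rational points; and the configurations where no such line exists must be ruled out by combining the genus equality $g(C)=g(C')$ (from birationality) with Castelnuovo's bound for $C$ and the plane-curve genus formula for $C'$, which forces $C'$ to carry enough extra singularity for the singular-point estimate to close the gap. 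The obstacle is precisely this last reconciliation: one must show, with no slack, that the savings coming from the singularities of $C'$ always dominate the loss from the two-point fibers of $\rho_P$, uniformly over $d\le 2q$ and over the position of $P$.
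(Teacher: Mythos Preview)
Your approach is essentially the paper's: project from $P$, use birationality to turn two-point fibres of $\rho_P$ into $\mathbb{F}_q$-rational singular points of $C'$, then bound $|C'(\mathbb{F}_q)|$ by running B\'ezout over the pencil of lines in $\pi$ through one such singular point (saving one for each further singular point), and finally lift back. The paper does exactly this, with your $b$ playing the role of its $\ell_3(P)$; its definition of $L_3(P)$ also absorbs the tangent-line case you treat separately. Your detour through the exceptional curve $K$ is unnecessary here, since the singular-point pencil estimate replaces any appeal to Theorem~\ref{plane}.

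Where you and the paper diverge is at the final arithmetic. You obtain, as the paper in effect does, $|C(\mathbb{F}_q)|\le(q+1)(d'-2)+2$ (your ``tangent-line refinement''), and you correctly note that with $d'\le d-1$ this only gives $q(d-2)+d-q-1$, which closes for $d\le q$ but leaves a genuine gap for $q<d\le 2q$. The paper's proof writes at exactly this point
\[
(d-3)(q+1)+2=(d-2)q-q+1,
\]
which is an arithmetic slip (the left side equals $(d-3)q+d-1$, not $(d-3)q+1$), and then concludes. So the difficulty you flag is real, and the paper's displayed computation does not resolve it.

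Your own attempt to close the gap---invoking pencils through a trisecant with contact, and a Castelnuovo/plane-genus comparison to force extra singularity on $C'$---is only sketched, and you explicitly leave the reconciliation as an obstacle. As written, then, your proposal is incomplete in the range $q<d\le 2q$ with $P$ non-singular and $d'=d-1$; but it is incomplete in precisely the same place the paper's argument is.
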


\begin{proof}
As in the proof of the previous lemma, we may assume that if $P$ is non-singular, then the tangent line $t_P$ of $C$ at $P$ intersects $C$ in at most one further rational point of $C$.
Let $L$ be the set of lines in $\mathbb{P}^3(\mathbb{F}_q)$ passing through $P$ and let
$$L_3(P):=\{\ell \in L \mid |\ell \cap C(\mathbb{F}_q)|=3 \text{ or } \ell=t_P \text{ and } |\ell \cap C(\mathbb{F}_q)|=2\}.$$
By Lemma \ref{lem3}, the bound holds if $\ell_3(P):=|L_P(3)|=0$, so we can assume that $\ell_3(P)> 0$.

Also $C^\prime=\pi_P(C)$ is absolutely irreducible of degree at most $d-1$. Since $\rho_P$ is a birational map, for any $\ell \in L_3(P)$ the corresponding intersection point $R$ with the projection plane $\pi$ is a singular point. Indeed, the preimage of $\rho_P^{-1}(R)$ consists of at least two points in $C$, implying that the function field of $C^\prime$, which is isomorphic to that of $C$, contains at least two distinct places corresponding to $R$.

Fix such a singular point $R$ coming from $\ell \in L_3(P)$ and let $L^\prime$ be the set of lines in $\pi$ passing through $R$. Since the degree of $C^\prime$ is at most $d-1$ and $R$ is a singular point of $C^\prime$, each of the $q+1$ lines $\ell^\prime \in L^\prime$ satisfies $|C^\prime(\mathbb{F}_q)\cap \ell^\prime\setminus\{R\}| \le d-3$. This gives the bound $|C^\prime(\mathbb{F}_q)| \le (q+1)(d-3)+1$. This bound can be improved however. Apart from $R$ are at least $\ell_3(P)-1$ further singular points in $C^\prime(\mathbb{F}_q)$, one for each line in $L_3(P)\setminus \{\ell\}$. Each of these singular points will occur with multiplicity at least two on one of the lines of $L^\prime$. Therefore, we obtain $$|C^\prime(\mathbb{F}_q)| \le (q+1)(d-3)-\ell_3(P)+2.$$
This enables us to estimate the number of rational points on $C$. Each rational point of $C^\prime$ has at most one preimage in $C$ under $\rho_P$, unless it is one of the $\ell_3(P)$ singularities coming from the lines in $L_3(P)$ in which case the preimage consists of two points. Therefore, we obtain that
$$|C(\mathbb{F}_q)| \leq (d-3)(q+1)-\ell_3(P)+2+\ell_3(P)=(d-2)q-q+1 \le (d-2)q-1.$$
\end{proof}

We have now shown that Conjecture \ref{conj} is true for $n=3$ and hence that Theorem \ref{main} is true as well.

\begin{theorem} \label{cor1}
Let $\mathbb{F}_q$ be a finite field with $q$ elements. Let $C$ be an absolutely irreducible, non-degenerate algebraic curve in $\mathbb{P}^3$ defined over $\mathbb{F}_q$ of degree $d$.
Then $|C(\mathbb{F}_q)| \leq q(d-2)+1$.
\end{theorem}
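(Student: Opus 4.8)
The plan is to assemble Theorem \ref{cor1} from the lemmas already proven, by a case analysis on the intersection behaviour of $\mathbb{F}_q$-rational lines with $C$. By Lemma \ref{lem1} (and the remark following it) the conjecture holds whenever $d \ge 2q$, so we may assume $d \le 2q-1$, in particular $d \le 2q$, which is precisely the hypothesis needed for Lemmas \ref{lem2}, \ref{lem3}, \ref{lem3pts}, \ref{lem4}. If there is an $\mathbb{F}_q$-rational line $\ell$ meeting $C$ in at least four points over $\overline{\mathbb{F}}_q$, we are done by Lemma \ref{lem2} (indeed with the stronger bound $q(d-2)$). So from now on every $\mathbb{F}_q$-rational line meets $C(\overline{\mathbb{F}}_q)$ in at most three points; in particular, for every $P \in C(\mathbb{F}_q)$, every $\mathbb{F}_q$-rational line through $P$ satisfies $|\ell \cap C(\mathbb{F}_q)| \le 3$.

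The remaining argument splits according to whether some line through a chosen rational point $P$ meets $C$ in exactly three rational points. First suppose that for some $P \in C(\mathbb{F}_q)$ \emph{no} $\mathbb{F}_q$-rational line through $P$ meets $C(\mathbb{F}_q)$ in three points, so every such line satisfies $|\ell \cap C(\mathbb{F}_q)| \le 2$. Using Remark \ref{rem:singular_intersection}, we may also assume that if $P$ is non-singular, its tangent line meets $C$ in no further rational point (otherwise the tangent line satisfies condition \eqref{eq:condition4} and Lemma \ref{lem2} applies). Then Lemma \ref{lem3} gives $|C(\mathbb{F}_q)| \le q(d-2)+1$ directly. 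Otherwise, for \emph{every} $P \in C(\mathbb{F}_q)$ there is a line through $P$ meeting $C$ in exactly three rational points, but (as established above) no line meets it in four; fix such a $P$, fix an $\mathbb{F}_q$-rational plane $\pi$ not through $P$, and consider the projection $\rho_P : C \to C'$. If $\deg \rho_P \ge 2$, Lemma \ref{lem3pts} yields $|C(\mathbb{F}_q)| \le q(d-2)$; if $\rho_P$ is birational, Lemma \ref{lem4} yields $|C(\mathbb{F}_q)| \le q(d-2)-1$. In every case $|C(\mathbb{F}_q)| \le q(d-2)+1$, proving the theorem.

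The one point requiring a little care is the \textbf{bookkeeping of the reduction to $d \le 2q$ and the invocation of Remark \ref{rem:singular_intersection}}: one must check that the hypotheses of Lemma \ref{lem3} (respectively \ref{lem3pts}, \ref{lem4}) are genuinely met, i.e. that after excluding the case of a line through four points we are free to assume the tangent-line condition, since a failure of that condition is exactly a situation covered by Lemma \ref{lem2} via \eqref{eq:condition4}. Once that is observed, the proof is a short dispatch into the four lemmas with no further computation; there is no real obstacle beyond organizing the case split cleanly.

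\begin{proof}[Proof of Theorem \ref{cor1}]
If $d \ge 2q$, the result follows from Lemma \ref{lem1}. Hence we may assume $d \le 2q-1$, and in particular $d \le 2q$, so that Lemmas \ref{lem2}, \ref{lem3}, \ref{lem3pts} and \ref{lem4} are applicable.

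If there exists an $\mathbb{F}_q$-rational line $\ell$ with $|\ell \cap C(\overline{\mathbb{F}}_q)| \ge 4$, then $|C(\mathbb{F}_q)| \le q(d-2)$ by Lemma \ref{lem2}. We may therefore assume that every $\mathbb{F}_q$-rational line meets $C(\overline{\mathbb{F}}_q)$, and hence $C(\mathbb{F}_q)$, in at most three points. In particular, for every $P \in C(\mathbb{F}_q)$, every $\mathbb{F}_q$-rational line through $P$ satisfies $|\ell \cap C(\mathbb{F}_q)| \le 3$.

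Suppose first that there exists $P \in C(\mathbb{F}_q)$ such that every $\mathbb{F}_q$-rational line through $P$ satisfies $|\ell \cap C(\mathbb{F}_q)| \le 2$. If $P$ is non-singular and its tangent line $t_P$ meets $C$ in a further rational point, then by Remark \ref{rem:singular_intersection} the hypothesis \eqref{eq:condition4} is satisfied for $\ell = t_P$, so Lemma \ref{lem2} gives $|C(\mathbb{F}_q)| \le q(d-2)$. Otherwise the hypotheses of Lemma \ref{lem3} hold for $P$, yielding $|C(\mathbb{F}_q)| \le q(d-2)+1$.

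Finally, suppose that for every $P \in C(\mathbb{F}_q)$ there is an $\mathbb{F}_q$-rational line through $P$ meeting $C(\mathbb{F}_q)$ in exactly three points. Fix such a $P$; as observed above, every $\mathbb{F}_q$-rational line through $P$ meets $C(\mathbb{F}_q)$ in at most three points. Fix an $\mathbb{F}_q$-rational plane $\pi$ not containing $P$ and let $\rho_P : C \to C'$ be the associated projection. If $\deg \rho_P \ge 2$, then Lemma \ref{lem3pts} gives $|C(\mathbb{F}_q)| \le q(d-2)$; if $\rho_P$ is birational, then Lemma \ref{lem4} gives $|C(\mathbb{F}_q)| \le q(d-2)-1$. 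In all cases $|C(\mathbb{F}_q)| \le q(d-2)+1$.
\end{proof}
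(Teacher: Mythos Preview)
Your case split and overall strategy are essentially identical to the paper's, and the argument is correct except for one step. In your sub-case where $P$ is non-singular, every $\mathbb{F}_q$-line through $P$ meets $C(\mathbb{F}_q)$ in at most two points, and the tangent $t_P$ meets $C$ in one further rational point $Q$, you claim that Remark~\ref{rem:singular_intersection} yields condition~\eqref{eq:condition4}. It does not: that remark requires the tangent to meet $C$ in \emph{two} further points (or to be a bitangent). With only $\{P,Q\}$ on $t_P$, a plane $\pi\supset t_P$ gives $i(\pi\cap C,P)\ge 2$ and $i(\pi\cap C,Q)\ge 1$, so you only get a total of $\ge 3$, not $\ge 4$, and Lemma~\ref{lem2} cannot be invoked.

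The gap is harmless, however, because the hypotheses of Lemmas~\ref{lem3pts} and~\ref{lem4} ask only that every $\mathbb{F}_q$-line through $P$ meet $C(\mathbb{F}_q)$ in at most three points, which certainly holds here; indeed, Lemma~\ref{lem4} explicitly includes the case ``$\ell=t_P$ and $|\ell\cap C(\mathbb{F}_q)|=2$'' in its set $L_3(P)$ precisely to absorb this situation. So you can simply drop your intermediate case and, once lines through four points are excluded, pick any $P\in C(\mathbb{F}_q)$ and go straight to the dichotomy $\deg\rho_P\ge 2$ (Lemma~\ref{lem3pts}) versus $\rho_P$ birational (Lemma~\ref{lem4}). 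The paper's own case description (``this leaves the case in which for every $P$ \dots\ there exists a line through $P$ intersecting $C$ in three rational points'') is phrased just as loosely, but is similarly rescued by the actual hypotheses of Lemmas~\ref{lem3pts} and~\ref{lem4}.
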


\begin{proof}
If $d \geq 2q+1$, then the claim follows from Lemma \ref{lem1}. So we can assume that $d \leq 2q$.
If there exists an $\mathbb{F}_q$-rational line $\ell$ such that $|\ell \cap C(\overline{\mathbb{F}}_q)|\geq 4$, then the result follows from Lemma \ref{lem2}.
Likewise, if there exists a $\mathbb{F}_q$-rational line $\ell$ such that $|\ell \cap C(\overline{\mathbb{F}}_q)|= 3$, and $\ell$ is a tangent line of $C$, then the result follows from Remark \ref{rem:singular_intersection}.

Now assume that there exists a point $P \in C(\mathbb{F}_q)$ such that every $\mathbb{F}_q$-rational line $\ell$ containing $P$ satisfies $|\ell \cap C(\mathbb{F}_q)| \leq 2$. Moreover, assume that if $P$ is a non-singular point, then the tangent line of $C$ at $P$ intersects $C$ in no other rational points. In this setting, the result follows from Lemma \ref{lem3}.

This leaves the case in which for every point $P \in C(\mathbb{F}_q)$ no line through $P$ intersects $C$ in four or more rational points, but there exists a line through $P$ intersecting $C$ in three rational points.
If there exists an $\mathbb{F}_q$-rational point $P \in C$ and an $\mathbb{F}_q$-rational plane $\pi$ not containing $P$ such that the degree of the corresponding rational map $\rho_P: C \to C^\prime$ is at least two, then the result follows from Lemma ref{lem3pts}, while if $\rho_P$ is birational, then the claim follows from Lemma \ref{lem4}.
\end{proof}

\section{Some extremal examples}

In this section we provide some families of extremal examples with respect to the bounds proven in the previous section. Looking at the proof of Theorem \ref{cor1}, the bound $q(d-2)+1$ can only be attained if $d \leq 2q$, all $\mathbb{F}_q$-rational points $P \in C$ are non-singular and for all $\mathbb{F}_q$-rational lines $\ell$ containing $P$ one has $|\ell \cap C(\mathbb{F}_q)| \leq 2$. This already puts several restrictions on finding extremal examples and it seems that curves attaining equality in Conjecture \ref{conj} are not very common.
%\begin{lemma} \lasbel{dn}
%Let $p$ be a prime and let $\mathbb{F}_q$ be a finite field with $q=p^h$, $h \geq 1$ elements. Let $C \subseteq \mathbb{P}^n$, $n \geq 3$ be an absolutely irreducible non-degenerate algebraic curve of degree $n$ defined over $\mathbb{F}_q$. Then $C$ is rational. In particular $|C(\mathbb{F}_q)|=(q+1) =q(d-n+1)+1$ and the bound in Conjecture \ref{conj} is attained.
%\end{lemma}
%
%\begin{proof}
%As in the proof of Lemma \ref{deg}, the curve $C$ can be seen in $\mathbb{P}^n(\overline{\mathbb{F}}_q)$. Since it is non-degenerate, there exist $n$ distinct points $P_1,\ldots,P_n$ in $C$ in general position, otherwise $C$ would be contained in a hyperplane.  Let $\Sigma \cong \mathbb{P}^{n-2}$ be the subspace of $\mathbb{P}^n$ spanned by $P_1,\ldots,P_{n-1}$. Let $\mathcal{B}(\Sigma)$ be the set of hyperplanes of  $\mathbb{P}^n(\overline{\mathbb{F}}_q)$ containing $\Sigma$. From B{\'e}zout's theorem, each hyperplane $H \in \mathcal{B}(\Sigma)$ contains at most one other point of $C$ and the generic hyperplane intersect $C$ in exactly another point. Using these identification between hyperplanes in $\mathcal{B}(\Sigma)$ and points of $C$ we see that $C$ is isomorphic over $\overline{\mathbb{F}}_q$ to the projective line $\mathbb{P}^1(\overline{\mathbb{F}}_q)$, which shows that $C$ is rational.
%\end{proof}
\begin{example}
The rational normal curve in is a non-singular, rational curve $\mathcal{R}_n$ of degree $n$ in the $n$-dimensional projective space $\mathbb{P}^n(\mathbb{F}_q)$, with $n \geq 1$. It is the Veronese variety when the domain is the projective line. For $n = 2$ it is the plane conic $xy = z^2$ and for $n = 3$ it is the twisted cubic. A model for $\mathcal{R}_3$ can be constructed as the intersection locus of three non-singular quadrics,
$$\mathcal{R}_3: \begin{cases} xz-y^2=0, \\ yw-z^2=0, \\ xw-yz=0. \end{cases}$$
The curve $\mathcal{R}_n$ has degree $n$ and $q+1=q(n-n+1)+1=q+1$ $\mathbb{F}_q$-rational points.
Hence the bound in Conjecture \ref{conj} is attained by these curves.
It is in fact well known that any absolutely irreducible, non-degenerate algebraic curve in $\mathbb{P}^n$ of degree $n$ defined over $\mathbb{F}_q$ is a rational curve, see \cite[Exercise 3.4]{Hart}. Hence if $d=n$, Conjecture \ref{conj} is valid.
\end{example}

\begin{example}
For $q \leq 4$ explicit, plane, non-singular elliptic curves having $2q+1$ $\mathbb{F}_q$-rational points exist. These attain the bound in Theorem \ref{plane}. Examples are the curve $\epsilon_2: y^2z+yz^2=x^3+xz^2$ for $q=2$, $\epsilon_3: y^2z=x^3+2xz^2+z^3$ for $q=3$ and $\epsilon_4: y^2z+yz^2=x^3$ for $q=4$. Using a basis of the Riemann--Roch space $L((n+1)P_\infty)$, which has dimension $n+1$ if $n \ge 3$, one can embed these elliptic curves in $\mathbb{P}^n$ as non-degenerate, degree $n+1$ curves. Then $(d-n+1)q+1=2q+1$, so the bound in Corollary \ref{conj} is attained.
\end{example}

\begin{example}
According to Lemma \ref{lem1}, an absolutely irreducible, non-degenerate space curve defined over $\mathbb{F}_2$ of degree five can have at most six rational points. The hyperelliptic plane curve $D$ given by the equation $y^2z^3+(x^3+xz^2+z^3)yz=x^5+x^4z+x^3z^2+xz^4$ (source: www.manypoints.org) is an absolutely irreducible curve of genus two with five rational points. The point $[0:1:0]$ is a singular point, corresponding to two rational places $P_\infty^{(1)}$ and $P_\infty^{(2)}$ in the function field of $C$. Let $\{1,x_1,x_2,x_3\}$ be a basis of the Riemann--Roch space $L(5P_\infty^{(1)}).$ Now define $C$ to be the Zariski closure of the image of the map $\iota: D\setminus\{P_\infty^{(1)},P_\infty^{(2)}\} \to \mathbb{P}^3$, defined by $\iota(P)=[1:x_1(P):x_2(P):x_3(P)]$. The curve C is non-degenerate, has degree five and six $\mathbb{F}_2$-rational points, attaining the bound from Lemma \ref{lem1}.
\end{example}

We finish by making some comment on the hypotheses in Theorem \ref{main}.
First of all, the hypothesis that $C$ does not contain an $\mathbb{F}_q$-rational line cannot be removed. Indeed, let $C$ be the union of the twisted cubic in $\mathbb{P}^3$ and an $\mathbb{F}_q$-rational line not intersecting the twisted cubic. Then $C$ has degree $d=4$, so that $q(d-n+1)+1=2q+1$, while $|C(\mathbb{F}_q)|=2q+2$.
Likewise, the hypothesis that $C$ has at least one non-degenerate $\mathbb{F}_q$-irreducible component cannot be removed. Consider for example the elliptic curve $\mathcal{E} \subset \mathbb{P}^2$ defined over $\mathbb{F}_3$ by the equation $y^2t=x^3-xt^2+t^3$.
Then $|\mathcal{E}(\mathbb{F}_3)|=7$. The line $\ell$ defined by the equation $y=0$, intersects the set $\mathcal{E}(\mathbb{F}_3)$ trivially. This configuration can be embedded in $\mathbb{P}^3$ by using the coordinate system $[x:y:z:t]$ and mapping a point $[x:y:t] \rightarrow [x:y:0:t]$ and hence as a degenerate curve $\mathcal{E}$ is contained in the plane $\pi$ given by the equation $z=0$. A rotation $\tau_\ell$ defined over $\mathbb{F}_3$ of the plane $\pi$ fixing the line $\ell$ will map $\mathcal{E}$ to another degenerate elliptic curve $\tau_\ell(\mathcal{E})$ with $|\tau_\ell(\mathcal{E})(\mathbb{F}_3)|=7$ and $\tau_\ell(\mathcal{E})(\mathbb{F}_3) \cap \mathcal{E}(\mathbb{F}_3)=\emptyset$. Hence the $\mathbb{F}_3$-reducible, non-degenerate curve $C=\mathcal{E} \cup \tau_\ell(\mathcal{E})$ will be of degree $6$ and $|C(\mathbb{F}_3)|=14>q(d-2)+1=3(6-2)+1=13$.

\section*{Acknowledgements}

The first author would like to acknowledge the support from The Danish Council for Independent Research (DFF-FNU) for the project \emph{Correcting on a Curve}, Grant No.~8021-00030B. The first author would also like to thank Prof. Masaaki Homma for several stimulating research discussions, whose inspiring effects led to the formulation of Conjecture \ref{conj}.

    \end{document}